\newcommand{\B}{\mathtt{B}}
\newcommand{\Y}{{\rm Y}_{d,n}^{\mathtt B}}
\newcommand{\s}{\mathtt s}
\newcommand{\R}{\mathtt r}
\newcommand{\T}{\mathcal{T}_{ST}}
\newcommand{\TB}{TB_n^{\mathtt{B}}}
\newcommand{\U}{\mathsf{u}}
\newcommand{\f}{\mathcal{F}}
\newcommand{\V}{\mathsf{v}}
\newcommand{\x}{\mathsf{x}}
\newcommand{\tr}{\mathtt{tr}_{n}}
\newcommand{\y}{\mathsf{y}}
\newcommand{\z}{\mathsf{z}}
\newcommand{\w}{\mathsf{w}}
\newcommand{\E}{\mathcal{E}_n^{\mathtt{B}}}
\newtheorem{theorem}{Theorem}
\newtheorem{proposition}{Proposition}
\newtheorem{definition}{Definition}
\newtheorem{remark}{Remark}
\newtheorem{example}{Example}
\begin{document}

\title{Tied links in the solid torus}

\author{Marcelo Flores\footnotesize{$^1$}}
\thanks{$^1$ This project was partially supported by FONDECYT 11170305.}
\address{$^{1}$ Instituto de Matem\'{a}ticas \\ Universidad de Valpara\'{i}so \\ Gran Breta\~{n}a 1091, Valpara\'{i}so, Chile}
\email{marcelo.flores@uv.cl}

\keywords{Braids of type $\mathtt B$, bt-algebra of type $\mathtt{B}$, framization, Markov trace, link invariants, knots and links in solid torus}
\subjclass[2010]{05A18, 57M27, 57M25, 20C08, 20F36, 20F55}

\maketitle
\begin{abstract}
  We introduce the concept of tied links in the solid torus, which generalize naturally the concept of tied links in $S^3$ previously introduced by Aicardi and Juyumaya. We also define an invariant of these tied links by using skein relations, and subsequently we recover this invariant by using Jones' method over the bt-algebra of type $\B$ and the Markov trace defined on this.
\end{abstract}

\section{Introduction}
In \cite{jo83, jo84}, Jones constructs the famouus Jones polynopmial by using the Markov trace
function on the tower of classical Temperley-Lieb algebras. These algebras can be regarded as quotients of the associated Hecke algebras of type A. Subsequently, in \cite{jo}, he applies this procedure to Hecke algebras of type $\mathtt{A}$, obtaining as result the Homplypt polynomial, which had been defined previously in \cite{Homfly} by using skein relations. These procedure led to the idea of knot algebras, that are towers of algebras which support a Markov trace which may be rescaled and allow thus the construction new of invariants for knotted objects. The Hecke algebra, the Temperley-Lieb algebra and the BMW algebra are the most well-known examples of knot algebras. \smallbreak

The Yokonuma--Hecke algebra, which was originally introduced by T. Yokonuma \cite{yo} in the context of Chavalley gruops, is other significant example of knot algebra. Indeed, in \cite{juJKTR}, Juyumaya proves that the tower of Yokonuma--Hecke algebras support a unique Markov trace. Subsequently, by using Jones' method invariants for: framed links \cite{jula5}, classical links \cite{jula4} and singular links \cite{jula3} were constructed. Moreover, recently it was proved that the invariants for classical links constructed in \cite{jula4} are not topologically equivalent either to the Homflypt polynomial or to the Kauffman polynomial, see \cite{chjukala}.\smallbreak
In \cite{aiju}, Aicardi and Juyumaya introduces the algebra of braids and ties (or bt--algebra), denoted by $\mathcal{E}_n$, that is also a knot algebra. The term  \lq braids and ties\rq\ refers to the generators of this algebra, which have a diagrammatical interpretation in terms of braids and ties (see \cite[Section 6]{aijuMMJ}). This algebra is defined by abstractly considering it as a certain subalgebra of the Yokonuma--Hecke algebra ${\rm Y}_{d,n}:= {\rm Y}_{d,n}(\U)$. Subsequently, in \cite{aiju2}, a Markov trace for $\mathcal{E}_n$ is constructed by implementing the method of relative traces (see \cite{aijuMMJ}, cf. \cite{chpoIMRN,fjl,isog}). Then using Jones' method \cite{jo} over this trace, the invariant for classical knots $\overline{\Delta}(\U,\mathsf{A},\mathsf{B})$ and the invariant for singular knots $\overline{\Gamma}(\U,\mathsf{A},\mathsf{B})$ are define. It is worth noting that, for links, the invariant $\overline{\Delta}$ is more powerful than the Homflypt polynomial (see \cite[Addendum]{aijuMMJ}).  In \cite{aiju2}, the authors introduce the concept of tied links in $S^3$, which generalize classical links. These objects are the closure of tied braids that are come from the diagrammatical interpretation of the defining generators of the bt--algebra. Subsequently, they construct the invariant $\mathcal{F}$ for these new objects via skein relations. Finally, they prove an analogue of the Alexander and Markov theorem for tied links and recover the invariant $\mathcal{F}$ by applying Jones' method to the bt--algebra together with the Markov trace defined in \cite[Section 4]{aijuMMJ}.  \smallbreak

All the results that are mentioned above are related to Coxeter groups of type $\mathtt{A}$. However, there has been a growing
interest also in knot algebras related to Coxeter systems of type $\mathtt{B}$. Indeed, the affine and
cyclotomic Yokonuma-Hecke algebra is introduced in \cite{chpoIMRN}, and recently the first author together Juyumaya and
Lambopoulou \cite{fjl} introduced the algebra $\Y(\U,\V)$, that can be regarded as an analogue of the Yokonuma--Hecke algebra in the context of Coxeter groups of type $\mathtt{B}$. This algebra supports a Markov trace (see \cite[Theorem 3]{fjl}), consequently invariants for framed knots and links in the solid torus are obtained by applying Jones' method. In order to generalize the classical bt--algebra, in \cite{fl}, a braids and ties algebra of type $\mathtt{B}$ is introduced, denoted by $\E:=\E(\U,\V)$, for $n\geq 1$. This algebra is defined in analogy to the construction of the bt-algebra of type $\mathtt{A}$, that is, it is obtained by considering it abstractly as a certain subalgebra of $\Y(\U,\V)$. We further prove that $\E$ supports a Markov trace, and using this trace as the main ingredient in Jones' method, we then define an invariant of classical links in the solid torus $ST$.\smallbreak

Then, it is natural try to define a analogue to the concept of tied links, though in the context of Coxeter groups of type $\mathtt{B}$. Thus, the purpose of this article is to introduce the concept of tied links in the solid torus, which generalize naturally the concept of tied links in $S^3$ previously introduced by Aicardi and Juyumaya. We do so by considering the diagrammatic interpretation of the bt--algebra of type $\mathtt{B}$ \cite[Section 3.1]{fl}. We then define the invariant $\mathcal{F}_{\B}$ for tied links in $ST$ via skein relations. Finally, we prove analogues of the Alexander and Markov theorems in order to recover the invariant $\mathcal{F}_{\B}$ by applying Jones' method to the algebra $\E$.\smallbreak

The article is organized as follows. In Section~\ref{sec2}, we provide the notation and necessary results. In Section~\ref{sec3}, using an analogy to the classical case, we introduce the concept of tied links in $ST$. In Section~\ref{sec4} we define the invariant $\mathcal{F}_{\B}$, which coincide with $\mathcal{F}$ considering tied links in $S^3$ as affine tied links in $ST$ (see Remark~\ref{restriction2}). We prove that this invariant is unique by defining it via skein relations (see Theorem~\ref{skeinth}). In Section~\ref{sec5}, we introduce the Tied braid monoid of type $\mathtt{B}$, denoted $TB_n^{\B}$, which contains the monoid $TB_n$ originally defined in \cite[Section 3.1]{aiju2}. The monoid $TB_n^{\B}$ plays the role that $B_n$ does in the context of classical links in $S^3$. That is, we use this monoid to prove in Section~\ref{sec6} analogues of Alexander and Markov theorem for tied links. In Section~\ref{sec7}, using the natural the representation of $TB_n^{\B}$ in $\E$ (see Proposition~\ref{rep}), we define an invariant by using Jones' method on the Markov trace defined in \cite[Section 5]{fl}. Finally, we prove that the invariant obtained by this procedure is equivalent to the invariant $\mathcal{F}_{\B}$ from Section~\ref{sec4}.


\section{Preliminaries}\label{sec2}

In this section, we recall some basic results to be used. We begin recalling some basic notions about braids of type $\B$ and links in the solid torus.

\subsection{Gruops of type $\B_n$}\label{braidsB}
Set $n\geq 1$. We denote the Coxeter group of type ${\mathtt  B}_n$ by $W_n$. This group is the finite Coxeter group associated to the Dynkin diagram
\begin{center}
\setlength\unitlength{0.2ex}
\begin{picture}(350,40)
\put(82,20){$\R_1$}
\put(120,20){$\s_{1}$}
\put(200,20){$\s_{n-2}$}
\put(240,20){$\s_{n-1}$}

\put(85,10){\circle{5}}
\put(87.5,11){\line(1,0){35}}
\put(87.5,9){\line(1,0){35}}
\put(125,10){\circle{5}}
\put(127.5,10){\line(1,0){10}}

\put(145,10){\circle*{2}}
\put(165,10){\circle*{2}}
\put(185,10){\circle*{2}}

\put(205,10){\circle{5}}
\put(207.5,10){\line(1,0){35}}
\put(245,10){\circle{5}}
\put(192.5,10){\line(1,0){10}}


\end{picture}
\end{center}

The group $W_n$ can be realized as a subgroup of the permutation group of the set  $X_n:=\{-n, \ldots ,-1,1,$ $\ldots, n\}$. More specifically, the elements of $W_n$ are the permutations $w$ such  that $w(-m) = - w(m)$, for all $m \in X_n$.\smallbreak
Note that there is a natural projection $\tau:W_n\rightarrow S_n$, defined by $\R_1\mapsto 1$ and $\s_i\mapsto s_i$, where $s_i$ are the Coxeter generators of the symmetric group $S_n$.

The corresponding \textit{braid group of type} ${\mathtt  B}_n$ associated to $W_n$ is defined as the group $\widetilde{W}_n$ generated  by $\rho_1 , \sigma_1 ,\ldots, $ $\sigma_{n-1}$ subject to the following relations:

 \begin{equation}\label{braidB}
\begin{array}{rcll}
 \sigma_i \sigma_j & =  & \sigma_j \sigma_i, & \text{ for} \quad \vert i-j\vert >1,\\
  \sigma_i \sigma_j \sigma_i & = & \sigma_j \sigma_i \sigma_j, & \text{ for} \quad \vert i-j\vert = 1,\\
   \rho_1\sigma_i&=&\sigma_i\rho_1, &\text{ for}\quad i>1,\\
\rho_1 \sigma_1 \rho_1\sigma_1 & = & \sigma_1 \rho_1 \sigma_1\rho_1. &
  \end{array}
\end{equation}

Geometrically, braids of type ${\mathtt  B}_n$ can be regarded as classical braids of type ${\mathtt  A}_{n}$ with $n+1$ strands, such that the first strand is identically fixed. This strand is called `the fixed strand'. The 2nd, \ldots, $(n+1)$st strands are renamed from 1 to $n$ and are called `the moving strands'. The `loop' generator $\rho_1$ stands for the looping of the first moving strand around the fixed strand in the right-handed sense, see \cite{la1,la2}. Figure \ref{bbraid} illustrates a braid of type ${\mathtt  B}_6$.

\begin{figure}[h!]
\centering
\begin{minipage}{.4\textwidth}
  \centering
  \includegraphics{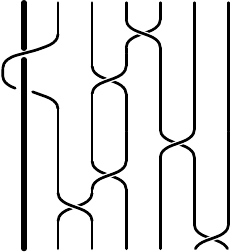}
  \captionof{figure}{A braid of type ${\mathtt  B}_6$.}
  \label{bbraid}
\end{minipage}%
\begin{minipage}{.6\textwidth}
  \centering
  \includegraphics{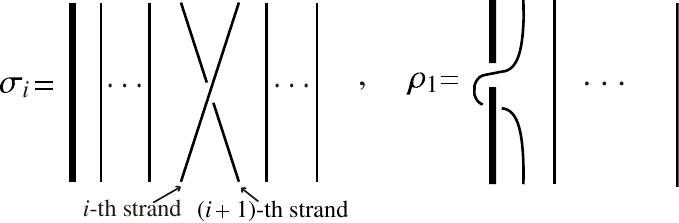}
  \captionof{figure}{Generators of $\widetilde{W}_n$.}
  \label{gentypeB}
\end{minipage}
\end{figure}

\subsection{Knots and links in the solid torus}\label{knotsB} It is well known that the solid torus $ST$ may be regarded as the complement in $S^3$ of another solid torus $\hat{I}$, i.e. $ST=S^3/\hat{I}$. So links in $ST$ can be regarded as \emph{mixed links} in $S^3$ containing the complementary solid torus. Therefore, any link $L$ in $ST$ is represented by a mixed link $\hat{I}\cup L'$ in  $S^3$, consisting of a standard link $L'$ in $S^3$, which is linked in some way with the fixed complementary torus part $\hat{I}$ (see Figure~\ref{linkST}). Consequently, a mixed link diagram is the projection of $\hat{I}\cup L'$ on the plane of the projection of $\hat{I}$. These facts also stand for oriented links in $ST$. \smallbreak

Thus, from now on, any oriented link $L$ in $ST$ with $n$ components will be seen as an oriented mixed link in $S^3$ with $n+1$ components. The component that represents the complementary solid torus, which is fixed and unknotted, will be called the \emph{fixed component} of $L$. The others $n$ components will be called the standard components of $L$.

\begin{figure}[h!]
\begin{center}
  \includegraphics{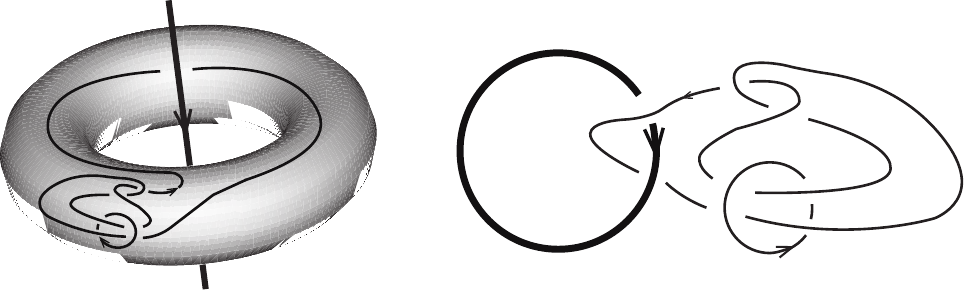}
	\caption{A link in the solid torus and its corresponding mixed link diagram.}\label{linkST}
	\end{center}
 \end{figure}
Thus, this mixed link diagram has crossings between standard components, called \emph{standard crossings} or simply crossings, and eventually it also has some loopings between the standard components and the fixed component, which are called loops (see Figure~\ref{crossings}).

\begin{figure}[h!]
\begin{center}
  \includegraphics{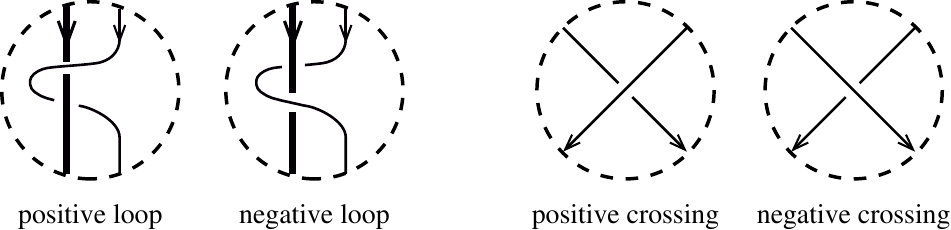}
	\caption{Standard crossing and loops.}\label{crossings}
	\end{center}
 \end{figure}

\begin{remark}\label{affinelinks}
  A link $L$ in $ST$ is called \textit{affine} if it lies in 3--ball in $ST$. Or in other words, the link $L$ does not have loops around the fixed component. Thus, classical links in $S^3$ can be regarded as an affine links in the solid torus.
\end{remark}

Two links in $ST$ are isotopic if and only if any two corresponding mixed links diagrams in $S^3$ differ by a planar isotopy and a finite sequence of mixed moves (see Figure~\ref{ReidM}) together with the three Reidermeister moves for the standard part of the link (see \cite{la1} for details).\\

\begin{figure}[h!]
\begin{center}
  \includegraphics{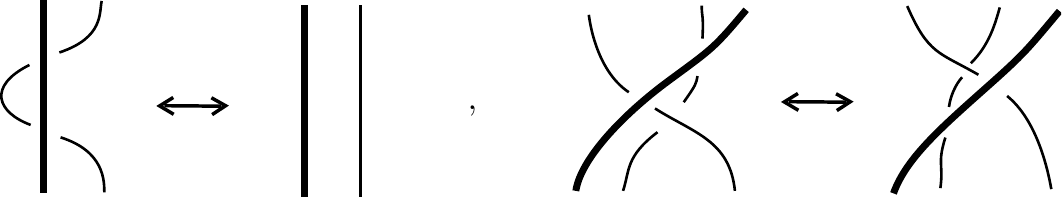}
	\caption{Reidemiester moves between mixed components.}\label{ReidM}
	\end{center}
 \end{figure}

Observe that Reidemeister moves in Figure~\ref{ReidM} imply the following move

  \begin{figure}[h!]
\begin{center}
  \includegraphics{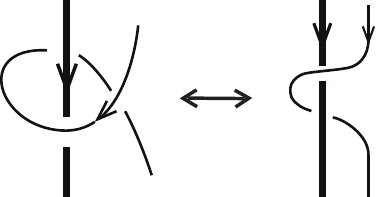}
		\end{center}
 \end{figure}
\noindent and the analogue one for the negative loop.\\

The closure of a braid $\alpha$ in the group $\widetilde{W}_n$ is defined by joining with simple (unknotted and unlinked) arcs its corresponding endpoints, and it is denoted by $\widehat{\alpha}$. The result of closure, $\widehat{\alpha}$, is a link in the solid torus. Thus, we have the following analogues of Alexander and Markov theorems for links in ST (see \cite{la1} for details).

\begin{theorem}\label{AlexanderST}
  Any oriented link in $ST$ is isotopic to a closure of a braid of type $\B$.
\end{theorem}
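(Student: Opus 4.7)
The plan is to adapt the classical Alexander theorem (for links in $S^3$) to the mixed link setting, treating the fixed complementary solid torus as an additional, identically preserved strand. So given an oriented link $L$ in $ST$, I first represent it by an oriented mixed link diagram $\hat{I}\cup L'$ in $S^3$, as in Section~\ref{knotsB}.

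First I would isotope $\hat{I}$ to a standard position so that its axis can serve as a braid axis; concretely, we view $\hat{I}$ as a vertical line in the plane of the projection (think of it as the leftmost, identically fixed strand of a prospective type-$\B$ braid). Next I would apply a braiding algorithm to the standard part $L'$ relative to this axis. One can use Alexander's original subdivide-and-throw-over-the-axis procedure, or the Yamada/Morton threading variant: orient $L'$, detect every arc whose orientation runs the ``wrong way'' around the axis, and push it (via Reidemeister moves performed in the complement of $\hat{I}$, so $\hat{I}$ remains unchanged) to the other side of the axis. After finitely many steps, all arcs of $L'$ wind uniformly around the axis, so $L'$ becomes a geometric braid on $n$ moving strands around $\hat{I}$.

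It remains to read off the resulting object as an element of $\widetilde{W}_n$. Crossings among the moving strands contribute the generators $\sigma_1^{\pm 1},\dots,\sigma_{n-1}^{\pm 1}$ in the usual Artin fashion. A looping of a moving strand around the fixed strand $\hat{I}$, which survives from the original mixed link (see Figure~\ref{crossings}), is precisely the geometric realization of the loop generator $\rho_1$, up to conjugation by a suitable braid that brings the looping strand to position $1$. Collecting these generators in the order dictated by the height function gives a word $\alpha\in\widetilde{W}_n$. Closing $\alpha$ by joining corresponding endpoints with simple unknotted and unlinked arcs reproduces $\hat{I}\cup L'$ up to planar isotopy and the mixed Reidemeister moves of Figure~\ref{ReidM}, so $L\cong\widehat{\alpha}$ in $ST$.

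The main obstacle is making the braiding step rigorous while guaranteeing that $\hat{I}$ is never disturbed: this is the only feature distinguishing the argument from the classical Alexander theorem in $S^3$. The standard way around this is exactly the algorithm of Lambropoulou~\cite{la1}, which performs all threading moves in the complement of a chosen fixed unknot, so I would invoke that algorithm verbatim rather than reprove it; the new content here is only the bookkeeping that translates each looping around $\hat{I}$ into a $\rho_1$ letter of $\widetilde{W}_n$.
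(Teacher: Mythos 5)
Your proposal is correct and matches the paper's treatment: the paper offers no proof of this statement, simply citing Lambropoulou~\cite{la1}, and your sketch is exactly the argument of that reference (braiding the standard part in the complement of the fixed component and reading off $\sigma_i^{\pm1}$ and $\rho_1$-conjugates), with the hard braiding step likewise deferred to~\cite{la1}. Nothing further is needed.
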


\begin{theorem}\label{MarkovST}
 Isotopy classes of oriented links in $ST$ are in bijection with equivalence classes of $\bigcup_n \widetilde{W}_n$, the inductive limit of braid groups of type $\mathtt{B}$, respect to the equivalence relation $\sim_{\mathtt B}$:
\begin{itemize}
  \item[(i)] $\alpha\beta \sim_{\mathtt B} \beta\alpha $,
  \item[(ii)] $\alpha\sim_{\mathtt B} \alpha\sigma_n$ and $\alpha\sim_{\mathtt B} \alpha\sigma_n^{-1}$,
\end{itemize}
for all $\alpha, \beta \in W_n$.
\end{theorem}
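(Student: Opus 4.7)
The plan is to follow the Lambropoulou--Rourke strategy for the classical Markov theorem, adapted to the mixed-link setting where the fixed component plays a purely spectator role. I would split the proof into the easy well-definedness direction and the harder converse.

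First, I would show that the moves do not change the isotopy class of the closure. For conjugation $\alpha\beta \sim_{\B} \beta\alpha$, the closure $\widehat{\alpha\beta}$ is obtained from $\widehat{\beta\alpha}$ by a cyclic slide of the block $\alpha$ around the top/bottom of the braid; since the fixed strand sits off to the left of the moving strands and is unaffected by this slide, the isotopy is just a planar isotopy together with, at most, the mixed moves of Figure~\ref{ReidM}. For positive stabilization $\alpha \sim_{\B} \alpha\sigma_n$, the closure introduces a small positive kink on the last strand; this is a Reidemeister I move on the standard part of the mixed link, so $\widehat{\alpha\sigma_n}$ is isotopic to $\widehat{\alpha}$ in $ST$. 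Negative stabilization is analogous.

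For the converse, I would combine Theorem~\ref{AlexanderST} with a mixed-link version of the Markov equivalence argument. Given two braids $\alpha \in \widetilde{W}_n$ and $\beta \in \widetilde{W}_m$ whose closures are isotopic links in $ST$, the isotopy is realized by a finite sequence of planar isotopies, the three Reidemeister moves on the standard part, and the mixed moves of Figure~\ref{ReidM}. The plan is to introduce an intermediate notion of L-moves for type $\B$ braids (insertion of a crossing cutting a strand into two, followed by threading one piece over or under the rest of the braid, with the fixed strand still kept on the left) and to show two things: (i) every Reidemeister/mixed move on the mixed-link diagram can be realized on any braid presentation by a finite sequence of L-moves, and (ii) every L-move can be written as a composition of conjugations in some $\widetilde{W}_k$ together with $\sigma$-stabilizations. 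Step (i) is essentially the standard braiding algorithm applied to the local picture of each move, performed so that the fixed component never gets braided; step (ii) is a diagrammatic reduction in which the inserted arc is pushed to the right end of the braid, becoming a $\sigma_n^{\pm 1}$-stabilization, while the rest of the move is absorbed into a conjugation.

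The main obstacle, and the conceptually interesting point, is explaining why one does not need a ``$\rho$-stabilization'' of the form $\alpha \sim_{\B} \alpha\rho_k$. Geometrically, threading the last strand around the fixed component produces a new loop, but that loop can always be pushed, via the mixed moves of Figure~\ref{ReidM} and planar isotopy of the closure, into the body of the existing braid word as a conjugate of $\rho_1$ by some $\sigma$-word, so it is already available inside $\widetilde{W}_n$ without enlarging the index $n$. Making this rigorous is the technical heart of the argument, and is exactly what is carried out in \cite{la1}; I would appeal to that construction at the L-move step rather than redo the full combinatorial reduction. Together, (i) and (ii) yield that $\alpha$ and $\beta$ are linked by a finite sequence of $\sim_{\B}$-moves, completing the proof.
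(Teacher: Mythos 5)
Your outline is correct and is essentially the same argument the paper relies on: the paper does not prove Theorem~\ref{MarkovST} at all, but simply quotes it from \cite{la1}, and your sketch (easy direction by checking that conjugation and $\sigma_n^{\pm 1}$-stabilization preserve the closure, converse via Theorem~\ref{AlexanderST} plus the L-move machinery, with the absence of a $\rho$-stabilization justified by absorbing new loops around the fixed component into conjugates of $\rho_1$ inside $\widetilde{W}_n$) is a faithful summary of Lambropoulou's proof, including an explicit appeal to \cite{la1} at exactly the step where the real technical work lies. Nothing in your proposal conflicts with the paper's treatment, and you correctly identify the one conceptually delicate point.
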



\section{Tied Links in the solid torus}\label{sec3}
In this section, we introduce the concepts of tied links in the solid torus and their diagrams. Indeed, a tied link in ST is simply a standard link in ST whose set of components are related in some way. We use ties as a formalism to indicate that two components are related. The ties will be drawn as a wavy line between two such components. These new knotted objects naturally generalize links in ST and classical tied links in $S^3$ (see \cite{aiju2}).

\begin{definition}\rm
   A tied (oriented) link in $ST$ with $n$ components is a pair $L(I):=(L,I)$, where $L$ is a link in $ST$ and $I$ is a collection of unordered pairs of points $(p_i, p_j)$ of $L$ (points in the fixed component are allowed). We called $I$ the set of ties. Thus, a pair $(p_i, p_j)\in I$ is represented as an wavy arc called tie that connects the points $p_i$ and $p_j$, which may belong to different components or to the same one. Ties they are not embedded arcs, they are just a notational device. Consequently, the arcs of $L$ can cross through the ties. We will denote $\T$ the set of oriented tied links in ST.
\end{definition}
\begin{remark}\label{restriction}
  If $I$ is empty, then $L(I)$ is nothing else that a classical link in $ST$. In the same fashion, if $L$ is an affine link in $ST$, and $I$ only contains pairs of points that belong to the standard components, then according to Remark~\ref{affinelinks}, $L(I)$ can be regarded as a tied link in $S^3$. Thus, we have that the set of classical tied links $\mathcal{T}$ from \cite{aiju2} is embedded in $\mathcal{T}_{ST}$.
\end{remark}

Note that the set $I$ induces a partition on the set of the components of $L$, where two components of $L$ belong to the same class if they are connected by a tie.

\begin{definition}
  Let $L(I)$ a tied link in $ST$. A diagram of $L(I)$ is a corresponding mixed link diagram of $L$ in $S^3$ provided with ties connecting pairs of points in the set of ties $I$.
\end{definition}

\begin{definition}\label{defisot}\rm
  Let $L(I), L'(I')$ be two oriented tied links. We say that $L(I)$ and $L'(I')$ are tie isotopic if:
  \begin{itemize}
    \item[(i)] $L$ and $L'$ are isotopic in $ST$ (Section~\ref{knotsB}).
    \item[(ii)] $I$ and $I'$ define the same partition in the set of components of $L$ and $L'$, respectively.
  \end{itemize}
\end{definition}
It is not difficult to check that tie isotopy is an equivalence relation, which is denoted symply by $\sim_t$.\smallbreak
From now on, without risk of confusion, when we say tied link we will refer tied link in $ST$. Additionally, we just write $L$ instead $L(I)$. \smallbreak

Note that tie isotopy says that we can move any tie between two components letting its extremes move along the whole component. Additionally, we can add or remove ties as long as these do not modify the induced partition on the set of components. For instance, we can add or remove:
\begin{itemize}
  \item ties connecting two points of the same component,
  \item ties between components that are already in the same class.
\end{itemize}

Let $L$ be a tied link, and let $c_i, c_j,c_k$ be three different components of $L$. Set points $p_s,p_s'\in c_s$ for $s\in \{i,j,k\}$. The tie isotopy also stand that if we have two ties $(p_i,p_j),(p_j',p_k)\in I$, we then can change indistinctly these ties for $(p_i,p_j)(p_i',p_k)$ or $(p_k,p_i)(p_k',p_j)$. For instance, Figure~\ref{ExampleTL} shows two tie isotopic links. It is clear that the components are ambient isotopic. On the other hand we also have that the corresponding set of ties induces the same partition into their respective components.

\begin{figure}[h!]
\begin{center}
  \includegraphics{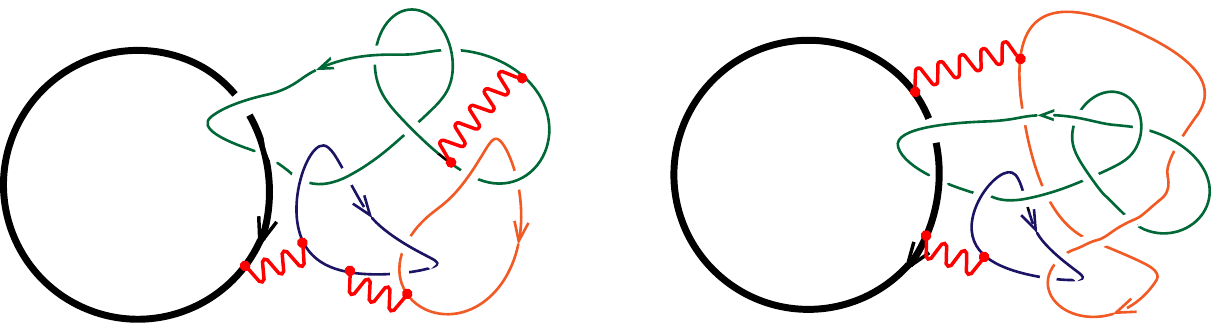}
	\caption{Two links tied--isotopic in ST.}\label{ExampleTL}
	\end{center}
 \end{figure}

\begin{definition}\rm
  We say that a tie is \emph{essential} if this cannot be removed, i.e. removing this tie we obtain a different partition in the set of components.
\end{definition}

For instance, in Figure~\ref{ExampleTL}, the tied link on the left has two essentials ties and one that is not (the tie connecting points in the green component).

\section{An invariant for tied links in ST}\label{sec4}
In this section, we construct an invariant for tied links in $ST$. In order to do that, we need to set notation. From now on, let $\U, \V, \x ,\y, \w, \z$ be indeterminates, and set $\mathbb{K}:=\mathbb{C}(\U, \V, \x ,\y, \w, \z)$. An invariant of ties links is nothing else that a function $\mathcal{F}_{\B}:\T\rightarrow \mathbb{K}$  that is constant in the classes of tie--isotopic links. We define this invariant via skein relations.\\

The following theorem is obtained by readjusting the arguments in \cite{aiju2}.

\begin{theorem}\label{skeinth}
  There exists an invariant of oriented tied links $\mathcal{F}_{\B}:\T\rightarrow \mathbb{K}$ that is uniquely defined by the following conditions:\\
  Let $\bigcirc, \tilde{\bigcirc}, \bigcirc_+$ $\tilde{\bigcirc}_+$ be the tied unknots in the Figure~\ref{unknots}.
  \begin{itemize}
    \item[(i)] \emph{Initial conditions:}  $\f_{\B}(\bigcirc)=1$; $\f_{\B}(\tilde{\bigcirc})=\x$; $\f_{\B}(\bigcirc_+)=\y$; $\f_{\B}(\tilde{\bigcirc}_+)=\w$.
    \item[(ii)] Let $L$ be a tied link. Then we have
    $$\f_{\B}(L\sqcup \bigcirc)= \frac{1}{\z\lambda}\f_{\B}(L),\quad \f_{\B}(L\sqcup \tilde{ \bigcirc})=\frac{\x}{\z\lambda}\f_{\B}(L),\quad \f_{\B}(L\sqcup \bigcirc_+)=\frac{\y}{\z\lambda}\f_{\B}(L)$$

    $$ \f_{\B}(L\sqcup \tilde{\bigcirc}_+)=\f_{\B}(L\tilde{\sqcup} \bigcirc_+)=\frac{\w}{\z\lambda}\f_{\B}(L).$$
  where $\tilde{\sqcup}$ means that we add the corresponding unknot tied together to some standard component of $L$. Additionally, we have:

  $$\f_{\B}(L\tilde{\sqcup} \tilde{\bigcirc}_+)=\frac{\w}{\z\lambda}\f_{\B}(\tilde{L}), \quad \f_{\B}(L\tilde{\sqcup} \tilde{ \bigcirc})=\frac{\x}{\z\lambda}\f_{\B}(\tilde{L})  $$
  where $\tilde{L}$ is the tied link obtained from $L$ by adding a tie from the component that is connected with the unknot added to the fixed component.
    \item[(iii)] \emph{Skein rule I:} Let $L_+, L_-, L_{\sim}, L_{+,\sim}, L_{-,\sim} $ be the diagrams of tied links, that are identical outside the small disk, whereas inside the disk the diagram looks as shown in Figure~\ref{skein1}. Then the following identity holds:
        $$\frac{1}{\lambda}\f_{\B}(L_+)-\lambda\f_{\B}(L_-)=(\U-\U^{-1})\f_{\B}(L_{\sim})$$
        where $\lambda:=\sqrt{\frac{\z - ({\U}- {\U}^{-1})\x}{\z}}$.
    \item[(iv)] \emph{Skein rule II:} Let $L_+^M, L_-^M, L_{\sim}^M, L_{+,\sim}^M, L_{-,\sim}^M $ be the diagrams of tied links, that are identical outside the small disk, whereas inside the disk the diagram looks as shown in Figure~\ref{skein2}. Then the following identity holds:
        $$\f_{\B}(L_+^M)-\f_{\B}(L_-^M)=(\V-\V^{-1})\f_{\B}(L_{\sim}^M)$$
     \end{itemize}
\end{theorem}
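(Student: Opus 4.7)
The plan is to follow the template of Aicardi--Juyumaya's argument for tied links in $S^3$, adapted so as to accommodate loop crossings around the fixed component of $ST$. The statement combines existence and uniqueness, and I would prove them in the usual order: first show that the four families of conditions, applied in any legal order, force a single value $\f_{\B}(L)\in \mathbb{K}$ on every $L\in \T$; then verify that the value produced is invariant under planar isotopy, the ordinary Reidemeister moves on standard crossings, the mixed Reidemeister moves of Figure~\ref{ReidM}, and the tie-isotopy of Definition~\ref{defisot}. Throughout, the reduction is organized so that each step preserves the induced partition on components, so the tie data only enters the recursion through the initial conditions and the disjoint-union rules.

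For uniqueness, I would introduce a complexity function on tied link diagrams, e.g.\ the triple $(c,\ell,k)$ where $c$ is the number of standard crossings, $\ell$ the number of loop crossings around the fixed component, and $k$ the number of connected components, ordered lexicographically. If $c>0$, Skein rule I expresses $\f_{\B}(L_+)$ in terms of $\f_{\B}(L_-)$ (same $c$, switched crossing) and $\f_{\B}(L_{\sim})$ (strictly smaller $c$, one new tie); iterating and using crossing-change moves one descends to a braid-like form with $c=0$. Skein rule II performs the analogous descent in $\ell$, eventually yielding a diagram consisting of a tie-decorated disjoint union of ordinary circles and circles linked once with the fixed component. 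The disjoint-union rules in (ii), applied recursively with the two mixed variants $\tilde{\sqcup}$, collapse such a diagram to a product of the four initial values in (i). This describes an algorithm computing $\f_{\B}(L)$.

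For existence, the task is to prove that the output of this algorithm does not depend on the order of resolutions. The cleanest way is to verify topological invariance directly. Reidemeister II and III for the standard part follow from Skein rule I by the same manipulation that yields the HOMFLYPT polynomial, using the fact that Skein I implies a quadratic relation on each standard crossing. Reidemeister I is not imposed separately: the scalar $\lambda=\sqrt{(\z-(\U-\U^{-1})\x)/\z}$ is chosen precisely so that inserting a positive or negative kink (tied or untied) rescales $\f_{\B}$ by $\z\lambda$, and the initial values $1,\x,\y,\w$ match the four possible tied-unknot types in Figure~\ref{unknots} consistently with this rescaling. The mixed Reidemeister moves of Figure~\ref{ReidM}, together with their consequences noted after that figure, are then a combination of the quadratic relation extracted from Skein rule II, the commutation $\rho_1\sigma_i=\sigma_i\rho_1$ for $i>1$, and the type-$\B$ relation $\rho_1\sigma_1\rho_1\sigma_1=\sigma_1\rho_1\sigma_1\rho_1$ from \eqref{braidB}. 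Tie-isotopy invariance is immediate from the fact that both skein rules produce new ties only up to the partition they generate, and the disjoint-union rules depend only on whether an added unknot is tied to $L$, not on where.

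The main obstacle is the compatibility of Skein I and Skein II at configurations where a loop crossing is adjacent to standard crossings near the fixed component, which is the genuinely new ingredient compared to \cite{aiju2}. Concretely, one must check that resolving a local tangle of the form $\sigma_1\rho_1\sigma_1\rho_1$ by first applying Skein I and then Skein II produces the same linear combination as doing it in the opposite order. Once this local identity, and its tied analogue involving an essential tie spanning the fixed component, is verified, the global well-definedness of the algorithm follows, and invariance under the mixed moves falls out automatically. Everything else is a careful but routine rewriting of the $S^3$ arguments of \cite{aiju2}, with the tie bookkeeping from Section~\ref{sec3} inserted at each step.
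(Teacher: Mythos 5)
Your proposal follows essentially the same route as the paper's own proof: both adapt the Lickorish--Millett induction (reduce a diagram to an ascending/unlinked form, resolve the deciding standard crossings with Skein rule I and the deciding loops with Skein rule II, collapse the resulting disjoint unions of tied unknots with rules (i)--(ii), then check independence of the order of resolutions, of the component ordering and base points, and Reidemeister invariance), and both defer those verifications to routine modifications of the $S^3$ argument, with you in fact making explicit the one genuinely new check -- the commutation of a Skein~I resolution with a Skein~II resolution near the fixed component -- that the paper leaves implicit. The only caveat is organizational: your lexicographic complexity $(c,\ell,k)$ does not strictly decrease under Skein rule I (the term $L_-$ has the same number of standard crossings as $L_+$), so the induction must be run, as in the paper, on the number of deciding crossings and deciding loops separating $L$ from its positive ascending diagram rather than on $(c,\ell,k)$ alone.
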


\begin{figure}[h!]
\begin{center}
  \includegraphics{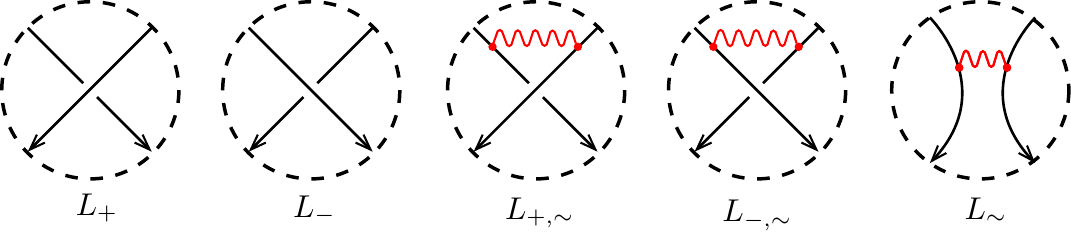}
	\caption{The disks where $L_+, L_-, L_{+,\sim}, L_{-,\sim}, L_{\sim}$ differ.}\label{skein1}
	\end{center}
 \end{figure}

\begin{figure}[h!]
\begin{center}
  \includegraphics{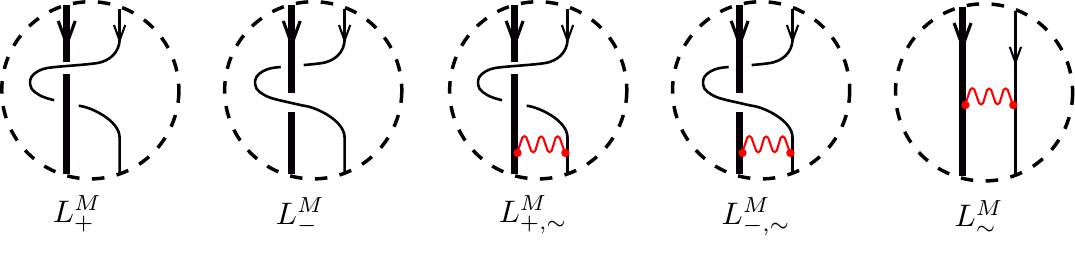}
	\caption{The disks where $L_+^M, L_-^M, L_{+,\sim}^M, L_{-,\sim}^M, L_{\sim}^M$ differ.}\label{skein2}
	\end{center}
 \end{figure}

 \begin{figure}[h!]
\begin{center}
  \includegraphics{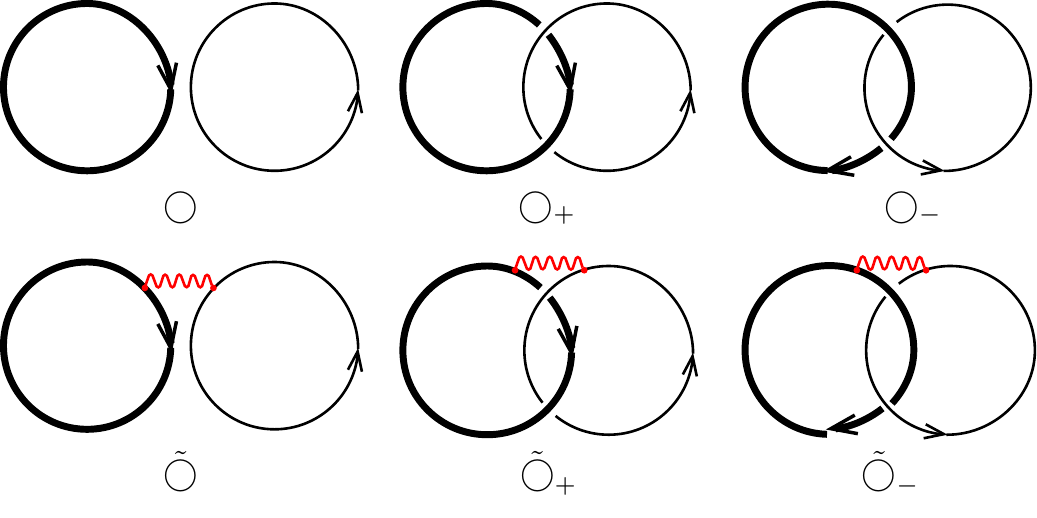}
	\caption{The six tied--unknots in ST.}\label{unknots}
	\end{center}
 \end{figure}

\begin{remark}
  Skein rules (ii) and (iii) imply the following skein rules:
   \begin{itemize}
     \item[(v)]  $\frac{1}{\lambda}\f_{\B}(L_{+,\sim})-\lambda\f_{\B}(L_{-,\sim})=(\U-\U^{-1})\f_{\B}(L_{\sim}),$
     \item[(vi)]$\f_{\B}(L_{+,\sim}^M)-\f_{\B}(L_{-,\sim}^M)=(\V-\V^{-1})\f_{\B}(L_{\sim}^M),$
   \end{itemize}
 which are obtained by adding a tie between the two strands inside the disc in each case.
\end{remark}

\begin{proof}
We proceed by following the proof of \cite{limi}. The proof has some slight changes when ties and loops around the fixed component are involved.\\

Let $\T^n$ be the set of diagram of $n$ crossings (recall Season~\ref{knotsB}), and let $L$ be in $\T^n$. It is well known that we can associate to $L$ an ascending diagram $L'$. To obtain this diagram, we first have to order the components and fix a base point on each of them. Then $L'$ is obtain by starting at the base point of the first component and changing all the overpasses to underpasses along the component. We then do the same process for the subsequent components. Thus, we obtain a diagram that every crossing is first encountered as an underpass. This process separates and unknots the components. Eventually the components of $L'$ have loops around the fixed component. Without loss of generality, we can assume that all are positive or negative, since two consecutive loops with opposite sign are isotopic to a segment that does not have loops around the fixed component (see Figure~\ref{ReidM}). Then, we define the positive ascending diagram $L'_+$ as the diagram that is obtain from $L'$ by changing the loops of the components of $L'$. We proceed as follows:\\
\begin{itemize}
  \item If a component of $L'$ has only positive loops. We leave the first loop (according to the orientation of the component) unaltered, the second one is change by a negative loop. We then do the same with the fourth and so on.
  \item If the component has only negative loops we proceed analogously.
\end{itemize}

Thus, if a component has $2n$ loops in the diagram $L'$, this will have $n$ couples of consecutive loops with opposite sign in the diagram $L'_+$. Analogously, if the number of loops is $2n+1$, the corresponding component $L'_+$ will have $n$ couples of consecutive loops with opposite sign and a positive or a negative loop at the end.
   We thus have that $L'_+$ is a disjoint union of tied unknots $\bigcirc, \bigcirc_-, \bigcirc_+$, which are tied together according to the initial ties in the link $L$. It is clear that $L$ and $L'_+$ just differ in a finite number of crossings and loops, called \lq\lq deciding crossings\rq\rq \ (deciding loops, respectively), where the signs in those crossing and loops are opposites. 
This procedure allows to get an ordered sequence of deciding points, whose order depends from the ordering of the components, and the choice of base points.\\
We now proceed by induction in the number of standard crossings. We thus assume that the function $\f_{\B}:\T^n\rightarrow \mathbb{K}$ satisfies the relations (i)-(iv), is independent of the ordering of the points, and of the choices of base points as well. Also, $\f_{\B}$ is invariant under Reidemeister moves. 
 Moreover, for any disjoint union of tied unknots on Figure~\ref{unknots}, the value of $\f_{\B}$ may be computed by using rules (i) and (ii). \\

We start with zero crossings. Thus, the tied link is a disjoint union of tied unknots. And we know the value of $\f_{\B}$ in this case.\smallbreak

Let $L$ be in $\T^{n+1}$. If $L$ is a disjoint union of tied--unknots the result follows. Otherwise, consider the first deciding crossing $p$. If in a neighborhood of $p$ the tied link looks like $L_+$ (or $L_-$), we can use the skein rule (iii) for writing the value of $\f_{\B}$ in terms of $L_-$ (or $L_+$) and $L_{\sim}$. 
 Then we apply the same procedure on the second deciding crossing and so on. Finishing this process, we proceed to do the same with the deciding loops though using skein rule (iv) (or (vi)). Remember that if the a loop looks like $L_+^M$ (or $L_-^M$), we can use the skein rule (iv) for writing the value of $\f_{\B}$ in terms of $L_-^M$ (or $L_+^M$) and $L_{\sim}^M$. Analogously, if the loop looks like $L_{+,\sim}^M$ (or $L_{-,\sim}^M$), we can use skein rules (vi) for deducing the value of $\f_{\B}$ in terms of $L_{-,\sim}^M$ (resp. $L_{+,\sim}^M$) and $L_{\sim}^M$.

Thus, at the end of the process, we have express $\f_{\B}(L)$ in terms of $\f_{\B}(L'_+)$ and two other tied links that are a disjoint union of unknots tied together in some way. For these unions the value of $\f_{\B}$ is known and only depends of the number of components and the number of essential ties. Thus, it remains to prove that:
\begin{itemize}
  \item[(i)] the procedure is independent of the order of the deciding crossings and deciding loops.
  \item[(ii)] the procedure is independent of the order of the components, and from the choice of base points.
  \item[(iii)] the function is invariant under Reidemeister moves.
\end{itemize}

The skein rule (iii) is similar to the skein rule used in \cite{limi} (Homflypt type). Indeed, just the link of right part of the equality changes, including a tie between the strands. We then omit the proofs of (i)--(iii), since these follow almost directly by slightly modifying the corresponding proofs given in \cite{limi}.
\end{proof}

\begin{remark}\label{restriction2}
  Let $\mathcal{F}$ be the invariant defined for tied links in $S^3$ in \cite[Section 2]{aiju2}. By Remark~\ref{restriction}, we have that $\mathcal{F}_B$ restricted to affine tied links in $\mathcal{T}_{ST}$ is equivalent to invariant $\mathcal{F}$.
\end{remark}

Recall from \cite{aiju2} that $\f$ holds the following properties:
\begin{itemize}
  \item[(i)] $\f$ is multiplicative with respect to the connected sum of tied links.
  \item[(ii)]  The value of $\f$ does not change if the orientations of all curves of the link are
reversed.
  \item[(iii)] Let $L$ be a link diagram whose components are all tied together, and $L^+$
be the link diagram obtained from $L$ by changing the signs of all crossings. Thus, $\f(L^+)$ is obtained from $\f(L)$ by the following changes: $ \lambda \rightarrow  1/\lambda$ and $\U \rightarrow 1/\U$.
\end{itemize}

It is not difficult to check that the invariant $\mathcal{F}_{\B}$ just satisfies an analogue of property (iii) above. More precisely, let $L$ be a tied link whose standard components are all tied together, and $L^*$ be the link diagram obtained from $L$ by changing the signs of all crossings. Thus $\f_{\B}(L^*)$ is obtained from $\f_{\B}(L)$ by doing the change: $\lambda \rightarrow 1/\lambda$, $\U \rightarrow 1/\U$., $\V \rightarrow 1/\V$.\smallbreak

On the other hand, unlike the classical case, there is no well-defined operation of connected sum for knots in $ST$ (see \cite{grav}). Thus, we do not have an analogous for (ii). Additionally, observe that the value of $\f_{\B}$ is not invariant if we reverse the orientation of all the components of the links. Indeed, we have that $\f_{\B}(\bigcirc_2)\not=\f_{\B}(\bigcirc_4)$. However, if we consider a tied link $L$ without loops, then $\f_{\B}(L)$ does not change if we reverse the orientation (cf. \cite[Section 2.2]{aiju2}).

\begin{example}\rm
  Let $H^+, \dot{H}^+,\tilde{\dot{H}}^+, H^+_{\ell}, \dot{H}^+_{\ell},\tilde{\dot{H}}^+_{\ell}$ be the tied links in the Figure~\ref{example}.

   \begin{figure}[h!]
\begin{center}
  \includegraphics{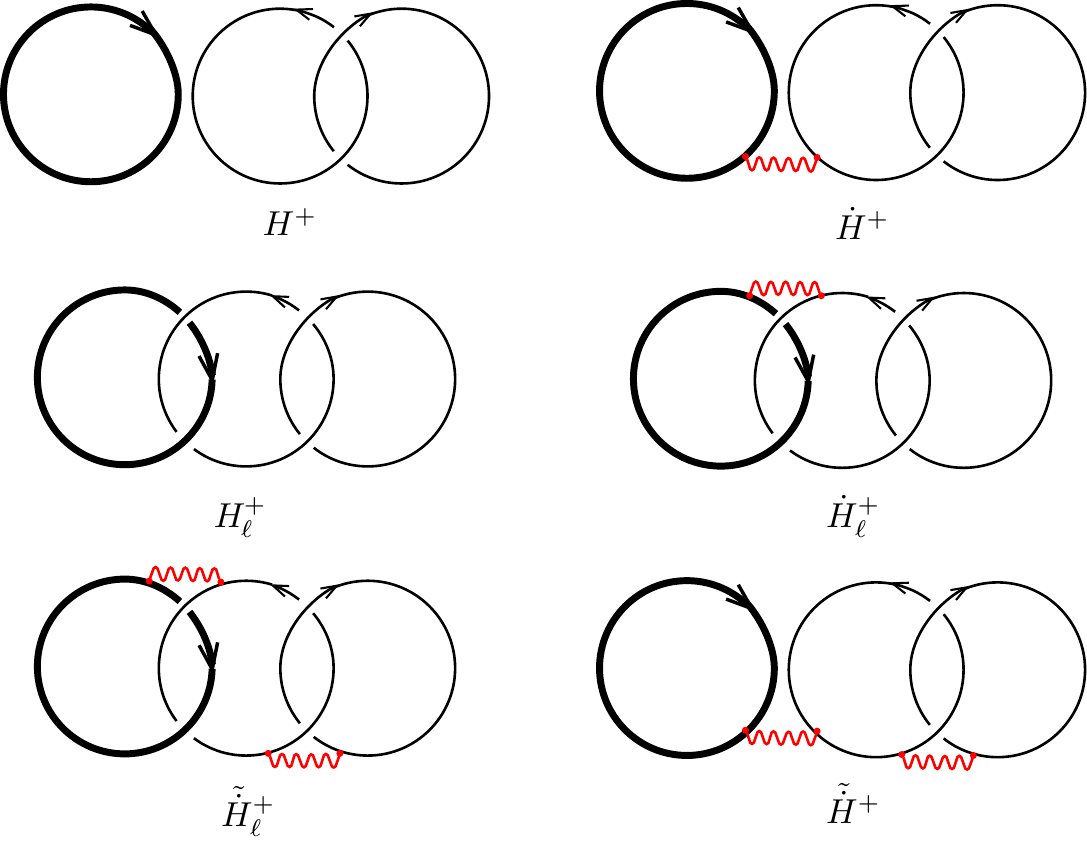}
	\caption{Different Hopf links in ST}\label{example}
	\end{center}
 \end{figure}

  We next compute the value of $\f_{\B}(\tilde{\dot{H}}^+_{\ell})$. Using (ii), we obtain
  $$\f_{\B}(\tilde{\dot{H}}^+_{\ell})=\lambda^2 \f_{\B}(H_1)+\lambda(\U-\U^{-1})\f_{\B}(H_2),$$
  where $H_1$ and $H_2$ are the tied links in Figure~\ref{skeinex}.

   \begin{figure}[h!]
\begin{center}
  \includegraphics{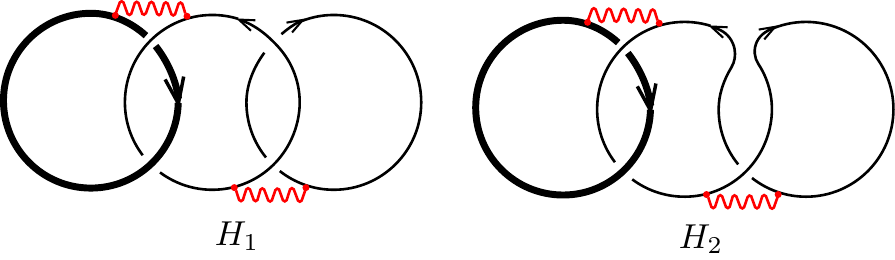}
  \caption{}\label{skeinex}
		\end{center}
 \end{figure}

  Therefore, we have:
  \begin{eqnarray*}
    \f_{\B}(\tilde{\dot{H}}^+_{\ell}) &=& \lambda^2\frac{\w\x}{\z\lambda}+\lambda(\U-\U^{-1})\w \\
     &=& \lambda\w(\frac{\x}{\z}+\U-\U^{-1})=\frac{\lambda\w(\x\U+\U^2\z-\z)}{\U\z}
  \end{eqnarray*}

  We can compute the polynomial of the others tied links in Figure~\ref{example} by an analogous way. More precisely, we have:
  \begin{eqnarray*}
   \f_{\B}(H^+) &=& \frac{\lambda(\U+\U^2\z-\z)}{\z\U},\quad \f_{\B}(\dot{H}^+)  = \frac{\lambda\x(\U+\U^2\z-\z)}{\z\U}  \\
    \f_{\B}(\tilde{\dot{H}}^+)&=& \frac{\lambda\x(\x\U+\U^2\z-\z)}{\z\U},\quad \f_{\B}(H^+_{\ell}) =  \frac{\lambda\y(\U+\U^2\z-\z)}{\z\U} \\
     \f_{\B}(\dot{H}^+_{\ell})&=& \frac{\lambda\w(\U+\U^2\z-\z)}{\U\z}
  \end{eqnarray*}
  \end{example}


\section{The tied braid monoid of type $\B$}\label{sec5}

In this section, we introduce the tied braid monoid of type $\B$ in order to obtain analogues for Alexander and Markov theorems for tied links in $ST$. This, with the aim of recovering $\f_{\B}$ via Jones' method using the algebra of braids and ties of type $\B$ and the respective Markov trace defined in \cite{fl} .\smallbreak

We begin introducing the tied braid monoid of type $\B$ and giving the corresponding diagrammatical interpretation.

\begin{definition}\rm
  We define the tied braid monoid of type $\B$, denoted by $TB_n^{\B}$, as the monoid generated by $\rho_1,\sigma_1,\dots , \sigma_{n-1}$, the usual braid generators of $\B$--type, and the generators $\phi_1, \eta_1,\dots ,\eta_{n-1}$, called ties, satisfying the relations (\ref{braidB}) of $\widetilde{W}_n$ together with the following relations:

\begin{eqnarray}
    \eta_i\eta_j&=&\eta_j\eta_i  \quad \mbox{for all $i, j$},\label{typeA1}\\
    \eta_i\sigma_i&=&\sigma_i\eta_i   \quad \mbox{for all $1\leq i\leq n-1$},\label{typeA2}\\
    \eta_i\sigma_j&=&\sigma_j\eta_i \quad \mbox{for all $|i-j|>1$},\label{typeA3} \\
    \eta_i\sigma_j\sigma_i&=&\sigma_j\sigma_i\eta_j\quad \mbox{for all $|i-j|=1$},\label{typeA4}\\
    \eta_i\sigma_j\sigma_i^{-1}&=&\sigma_j\sigma_i^{-1}\eta_j\quad \mbox{for all $|i-j|=1$},\label{typeA5}\\
    \eta_i\eta_j\sigma_i&=&\sigma_i\eta_i\eta_j= \eta_j \sigma_i \eta_j \quad \mbox{for all $|i-j|=1$},\label{typeA6}\\
    \eta_i^2 &=& \eta_i  \quad \mbox{for all $i$}, \label{typeA7}\\
    \phi_1^2&=&\phi_1 \quad  \label{typeB1} \\
    \rho_1\eta_i&=&\eta_i\rho_1\quad \mbox{for all $i$}, \label{typeB2}\\
    \rho_1\phi_1&=&\phi_1\rho_1 \label{typeB3}\\
    \phi_1\eta_i&=&\eta_i\phi_1 \quad \mbox{for all $i$},\label{typeB4}\\
    \phi_1\sigma_i&=&\sigma_i\phi_{1} \label{perm}\quad \mbox{for all $2\leq i\leq n-1 $},\label{typeB5}\\
    \sigma_{i-1}\dots\sigma_1\phi_1\sigma_1^{-1}\dots\sigma_{i-1}^{-1}&=& \sigma_{i-1}^{-1}\dots\sigma_1^{-1}\phi_1\sigma_1\dots\sigma_i\label{ef}\quad \mbox{for all $2\leq i\leq n$}.\label{typeB6}\\
    \phi_1\eta_1&=&\phi_1\sigma_1\phi_1\sigma_1^{-1}=\sigma_1\phi_1\sigma_1^{-1}\eta_1 \label{typeB7}
\end{eqnarray}
\end{definition}

\begin{remark}
Note that relations (\ref{typeA1})--(\ref{typeA7}) are exactly the defining relations of $TB_n$, the briad tied monoid defined in \cite{aiju2}. Therefore, we have $TB_n\leq TB_n^B$. 
\end{remark}

\begin{remark}
  For $n\geq 1$, we have that $\TB \subseteq TB_{n+1}^B$. Then, we can define $TB_{\infty}^B$ as the inductive limit $\sqcup_{n\geq 1} \TB$.
\end{remark}

 From now on, the relations of $TB_n$ will be called type--$\mathtt{A}$ relations, and the rest of them type--$\B$ relations.

In terms of diagrams, the generators $\rho_1,\sigma_1,\dots , \sigma_{n-1}$ represent the usual braid generators of type $\mathtt{B}$ (see Figure~\ref{gentypeB}). On the other hand, the defining generator $\phi_1$ corresponds to the braid of type $\mathtt{B}_n$ that has a tie connecting the fixed strand and the first moving strand, whereas $\eta_i$ is represented by the $\mathtt{B}$--type braid that has a tie connecting the $i$--th and $(i+1)$--st moving strands. See Figure~\ref{generatorsfig} for this identification.\smallbreak

\begin{figure}
  \centering
  \includegraphics{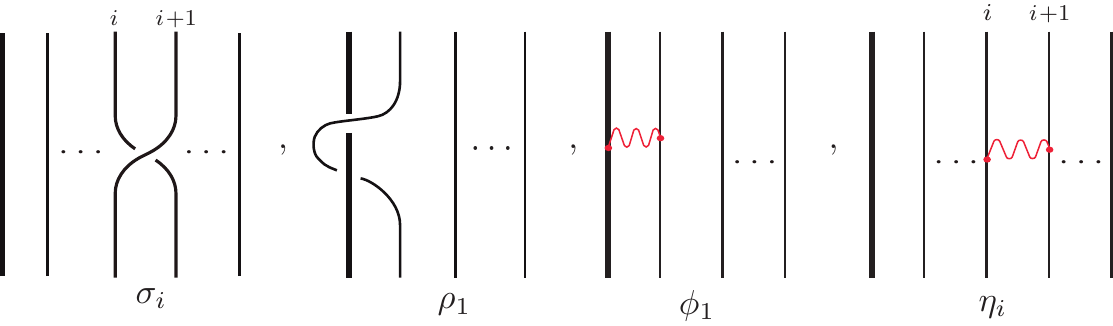}
  \caption{Diagrams corresponding to the generators of $\TB$}\label{generatorsfig}
\end{figure}

\begin{figure}
\centering
 \includegraphics{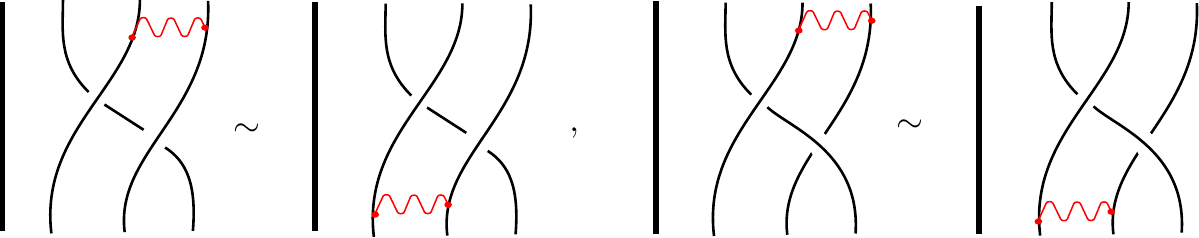}
 \caption{Relations (\ref{typeA4}) and (\ref{typeA5}) in terms of diagrams ($n=3$)}\label{tiedA}
\end{figure}

 The defining relations may also be expressed in terms of diagram. For instance, recall from \cite{aiju2} that relation (\ref{typeA4}) corresponds to move the tie from top to bottom behind or in front of the strand (see Figure\ref{tiedA}). For more details about type-$\mathtt{A}$ relations see \cite[Section 3.1]{aiju2}).
\subsection{Generalized ties}
 Let $\eta_{i,j}, \phi_{j}$ be the elements $\TB$ defined as follows:

 \begin{equation}\label{genties}
\begin{array}{rcll}
\eta_{i,j} & =  & \sigma_i\cdots\sigma_{j-2}\eta_{j-1}\sigma_{j-2}^{-1}\cdots \sigma_{i}^{-1} & \text{ for} \quad \vert i-j\vert >1,\\
\phi_{j} & = & \sigma_{j-1}\cdots\sigma_{1}\phi_{1}\sigma_{1}^{-1}\cdots \sigma_{j-1}^{-1} &\text{ for} \quad 2 \leq j\leq 1.
  \end{array}
\end{equation}
 where, by convention $\eta_{i,i}=1$ and $\eta_{i,i+1}=\eta_i$
We begin recalling some known facts about the elements $\eta_{i,j}$'s from \cite{aiju2}. By definition, we have that $n_{1,3}$ corresponds to the diagram in the top left of Figure~\ref{gentiesa}.\smallbreak

\begin{figure}
  \centering
  \includegraphics{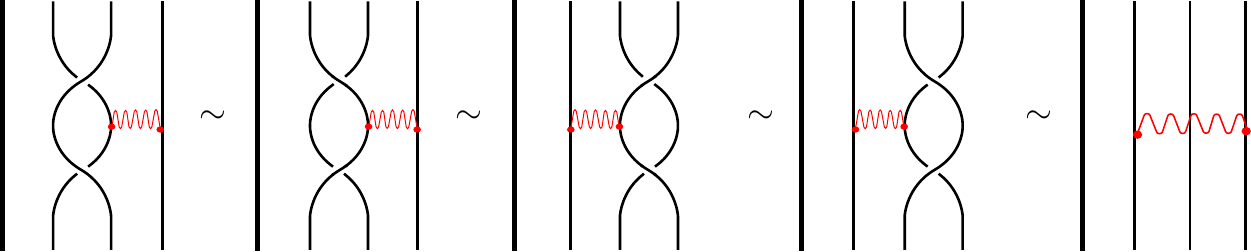}
  \caption{Equivalent diagrams of $\eta_{1,3}$ according Eq. (\ref{relaciongenties})}\label{gentiesa}
\end{figure}

Then, observed that, if the tie is provided with elasticity, we may transform such diagram into the diagram in top right of Figure~\ref{gentiesa} by using a Reidermeister move of second type. Thus, we consider ties as elastic objects, and therefore, they are represented as a spring.\smallbreak
More generally, using the defining relations of $TB_n^{\B}$, we know that there are $2^{k-i-1}$ equivalent expression for $n_{i,k}$. Specifically, given a pair $i,k$, such that $k-1>1$, we have that:
\begin{equation}\label{relaciongenties}
  n_{i,k}=s_is_{i+1}\cdots s_{k-2}\eta_{k-1}s_{k-2}^{-1}\cdots s_{i+1}^{-1}s_i^{-1}
\end{equation}
\noindent for all possible choices of  $s_l=\sigma_l$ or $s_l=\sigma_l^{-1}$ (see \cite[Section 3.2]{aiju2} for details). Thus, we have that the elements $\eta_{i,j}$ diagrammatically corresponds to an elastic tie joining the $i$-th moving strand with the $j$-th moving strand. \smallbreak


Additionally, we have that the following relations hold:
\begin{equation}\label{movepro}
 \begin{array}{cc}
   \sigma_i \eta_{i,j}= \eta_{i+1,j}\sigma_i & \sigma_j \eta_{i,j}= \eta_{i,j+1}\sigma_i  \\
  \sigma_{i-1} \eta_{i,j}= \eta_{i-1,j}\sigma_{i-1} &\sigma_{j-1} \eta_{i,j}= \eta_{i,j-1}\sigma_{j-1}
\end{array}
\end{equation}

and
\begin{equation}\label{equivA}
n_{i,k}n_{k,m}=n_{i,k}n_{i,m}=n_{k,m}n_{i,m}\quad \mbox{for all $1 \leq i,k,m\leq n$  }
\end{equation}

On the other hand, we can obtain similar results for the ties that are connected to the fixed strand. Indeed, for $i =2$ the relation (\ref{typeB6}) corresponds to the diagram in Figure~\ref{gentiesb}. Then, by using a Reidermeister move of second type, we also may consider that the tie is elastic (as in the type $\mathtt{A}$ case).  Thus, using induction, we have that the element $\phi_j$ diagrammatically corresponds to a tie joining the fixed strand and the $j$-th moving strand. Additionally, the elements $\phi_j's$ satisfy the following relations:

\begin{eqnarray}
\phi_j\eta_i&=&\eta_i\phi_{j} \quad \mbox{for all $1\leq i \leq n-1$ and $1\leq j\leq n$} \label{typeB1G}\\
\phi_j\sigma_i&=&\sigma_i\phi_{s_i(j)}\quad \mbox{where $s_i$ is the transposition $(i\ i+1)$} \label{typeB2G} \\
  \eta_{i,j}\phi_i &=& \phi_i\phi_j =\phi_j\eta_{i,j} \quad \mbox{for all $1\leq i\not=j \leq n$ }   \label{equivB}
\end{eqnarray}
Indeed, (\ref{typeB1G}) and (\ref{typeB2G}) follow directly by using defining relations (\ref{typeB4})--(\ref{typeB6}). And, we obtain (\ref{equivB}) by conjugating the defining relation (\ref{typeB7}) by the element $(\sigma_{i-1}\dots\sigma_{1})(\sigma_{j-1}\dots\sigma_1)$, whenever $i<j$, which we can suppose without loss of generality. Then, these relations correspond to the diagrams in Figure~\ref{gentiesb2}.

\begin{figure}
  \centering
  \includegraphics{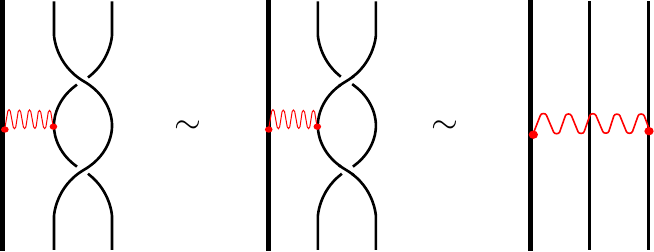}
  \caption{Relation (\ref{typeB6}) in terms of diagrams}\label{gentiesb}
\end{figure}

\begin{figure}
  \centering
  \includegraphics{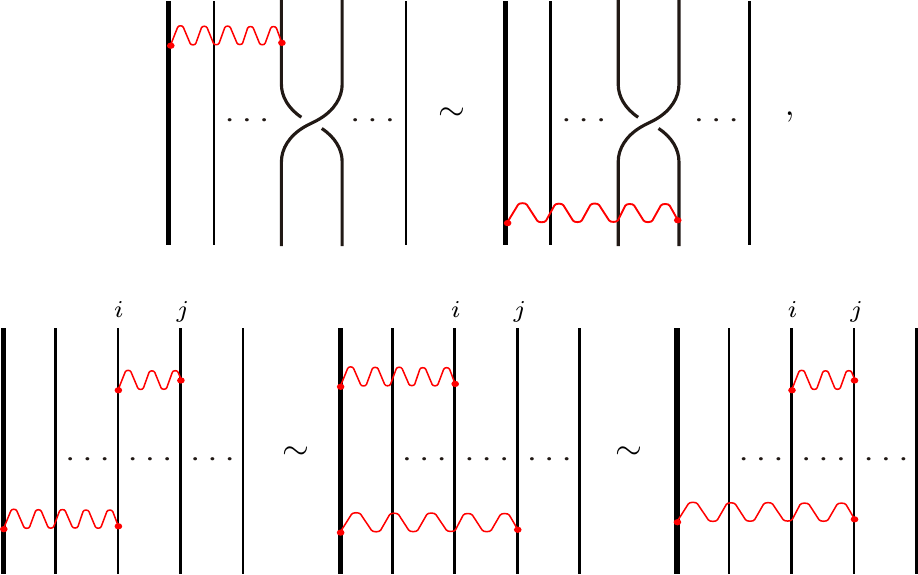}
  \caption{Relations (\ref{typeB2G}) and (\ref{equivB}) in terms of diagrams}\label{gentiesb2}
\end{figure}

Let $TB_n^{\sim}$ be the submonoid of $TB_n^{\B}$ generated by the elements $\eta_{i,j}, \phi_{j}$ with $1\leq i,j \leq n$.  Thus, using the preceding results, we have the following proposition

\begin{proposition}\label{movility}
  Let $\alpha$ be a tied briad in $\TB$. Then, $\alpha$ can be written by $\alpha=\gamma\beta$ (or $\alpha=\beta\gamma'$), where $\beta$ is a braid of type $\mathtt{B}$, and $\gamma$ ( or $\gamma$') is in $TB_n^{\sim}$. (cf. \cite[Proposition 3.2]{aiju2})
\end{proposition}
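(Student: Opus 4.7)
The plan is to proceed by induction on the length of a word representing $\alpha$ in the generators $\{\sigma_i^{\pm 1}, \rho_1, \eta_i, \phi_1\}$ of $\TB$. The base case (no tie generators present) is immediate: take $\gamma = 1$ and $\beta = \alpha$. For the inductive step I would write $\alpha = \alpha_0 g$ with $g$ the last letter and apply the inductive hypothesis to $\alpha_0$, obtaining a factorization $\alpha_0 = \gamma_0 \beta_0$ with $\gamma_0 \in TB_n^{\sim}$ and $\beta_0 \in \widetilde{W}_n$. If $g$ is a braid generator then $\alpha = \gamma_0(\beta_0 g)$ already has the required form, so the real content lies in the case where $g$ is a tie generator.

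The whole argument then reduces to the following claim: for every braid generator $h \in \{\sigma_k^{\pm 1}, \rho_1\}$ and every generalized tie $\tau \in \{\eta_{i,j}, \phi_j\}$, there exists $\tau' \in TB_n^{\sim}$ with $h \tau = \tau' h$. Applied letter-by-letter along $\beta_0$, this rewrites $\beta_0 g$ as $\tau \beta_0$ for some $\tau \in TB_n^{\sim}$, so that $\alpha = (\gamma_0 \tau) \beta_0$ is of the required form. The alternative factorization $\alpha = \beta \gamma'$ is then obtained symmetrically by pushing ties to the right.

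Establishing the claim requires splitting into subcases according to $h$ and $\tau$. For $h = \sigma_k^{\pm 1}$ and $\tau = \eta_{i,j}$, the sliding relations (\ref{movepro}) together with the non-adjacent commutation (\ref{typeA3}) produce $\tau'$ as a generalized tie whose two indices are the images of $i, j$ under the transposition $(k, k+1)$ (with the convention $\eta_{i,i} = 1$). For $h = \sigma_k^{\pm 1}$ and $\tau = \phi_j$, relation (\ref{typeB2G}) yields $\tau' = \phi_{s_k(j)}$. For $h = \rho_1$ against $\tau = \eta_i$ or $\tau = \phi_1$, the commutations (\ref{typeB2}) and (\ref{typeB3}) apply directly.

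The main obstacle is the remaining subcase in which $h = \rho_1$ meets a generalized tie whose defining expression in (\ref{genties}) involves $\sigma_1$: namely $\eta_{1,k}$ for $k \geq 3$ and $\phi_j$ for $j \geq 2$. Since $\rho_1$ does not commute with $\sigma_1$, the identity $\rho_1 \tau = \tau \rho_1$ cannot be obtained by simply commuting factors through the conjugating word. Diagrammatically the commutation is clear (the loop of strand $1$ around the fixed strand slides freely past a tie among moving strands or past a tie joining another moving strand to the fixed strand), so it must follow from the presentation; my plan is to exploit the two equivalent expressions of a generalized tie given by (\ref{relaciongenties}) and the type--$\B$ analogue in (\ref{typeB6}), which allow replacing certain $\sigma_1$'s by $\sigma_1^{-1}$'s inside $\tau$, combined with repeated use of the braid relation $\rho_1 \sigma_1 \rho_1 \sigma_1 = \sigma_1 \rho_1 \sigma_1 \rho_1$ and the commutations of $\rho_1$ with $\sigma_j$ for $j \geq 2$ already recorded in (\ref{braidB}). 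Once this subcase is verified the induction closes.
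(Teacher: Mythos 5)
Your overall strategy is the same as the paper's: the published proof is a one--line appeal to relations (\ref{movepro}) and (\ref{typeB2G}), i.e.\ exactly the letter-by-letter pushing of generalized ties past braid generators that you formalize by induction on word length. Your treatment of the $\sigma_k^{\pm 1}$ cases is therefore fine and matches the intended argument. You are also right that the only delicate point is the interaction of $\rho_1$ with a generalized tie whose standard expression involves $\sigma_1$ --- a point the paper's proof does not mention at all. For $\eta_{1,k}$ your plan succeeds, and more easily than you suggest: iterating the relation $\sigma_{j-1}\eta_{i,j}=\eta_{i,j-1}\sigma_{j-1}$ from (\ref{movepro}) gives $\eta_{1,k}=\sigma_{k-1}^{-1}\cdots\sigma_{2}^{-1}\,\eta_1\,\sigma_{2}\cdots\sigma_{k-1}$, an expression free of $\sigma_1$, and every factor commutes with $\rho_1$ by (\ref{braidB}) and (\ref{typeB2}); no appeal to the type--$\mathtt B$ braid relation is needed.

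The genuine gap is the subcase $\rho_1\phi_j=\phi_j\rho_1$ for $j\geq 2$, and the plan you sketch for it does not close. Unlike $\eta_{1,k}$, every expression of $\phi_j$ supplied by (\ref{genties}) and (\ref{typeB6}) passes through $\sigma_1$, and the manipulations you propose are circular: writing $\phi_2=\sigma_1\phi_1\sigma_1^{-1}$ and using $\sigma_1^{2}\phi_1=\phi_1\sigma_1^{2}$ (which is what (\ref{typeB6}) gives for $i=2$), the identity $\rho_1\phi_2=\phi_2\rho_1$ is equivalent to $(\sigma_1\rho_1\sigma_1)\phi_1=\phi_1(\sigma_1\rho_1\sigma_1)$, and pushing $\phi_1$ through $\sigma_1$ via (\ref{typeB2G}) turns the latter back into $\rho_1\phi_2=\phi_2\rho_1$. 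The relations (\ref{typeB1})--(\ref{typeB7}) only yield that $\rho_1$ commutes with the products $\phi_1\phi_2=\phi_1\eta_1$, not with $\phi_2$ alone, and since $\phi_1$ is a non-invertible idempotent one cannot cancel it. So either an additional derivation from the stated presentation is needed, or the presentation of $\TB$ is missing the (diagrammatically obvious) relation that $\rho_1$ commutes with $\phi_j$ for $j\geq 2$. To be clear, this hole is present in the paper's own proof as well --- indeed the paper never raises the issue --- but as written your induction does not terminate in this subcase, so you should either exhibit an explicit derivation of $\rho_1\phi_2=\phi_2\rho_1$ or flag it as requiring an amendment to the defining relations.
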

\begin{proof}
 The result follows easily by using relations (\ref{movepro}) and (\ref{typeB2G}).
\end{proof}

\begin{proposition}\label{partition}
 Let $\gamma$ be an element of $TB_n^{\sim}$. Then, $\gamma$ defines a equivalence relation in the set of n+1 strands (including the fixed strand). (cf. \cite[Proposition 3.3]{aiju2})
\end{proposition}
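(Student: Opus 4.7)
The plan is to follow the same pattern as \cite[Proposition 3.3]{aiju2}, extending the argument to include the fixed strand via the generators $\phi_j$. First I would label the strands of a tied braid in $\TB$ by $0,1,\dots,n$, where $0$ denotes the fixed strand and $1,\dots,n$ the moving strands. Given $\gamma\in TB_n^{\sim}$, written as a product in the generators $\eta_{i,j}$ and $\phi_j$, I declare strands $a,b\in\{0,1,\dots,n\}$ to be \emph{directly tied} in $\gamma$ if some factor in this product is an $\eta_{i,j}$ with $\{a,b\}=\{i,j\}$ (both $a,b\neq 0$) or a $\phi_j$ with $\{a,b\}=\{0,j\}$. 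The relation $\sim_\gamma$ on $\{0,1,\dots,n\}$ is then the transitive closure of this notion.

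Reflexivity is built into the definition and symmetry is clear because ties are unoriented. Transitivity holds by the transitive closure construction, so $\sim_\gamma$ is an equivalence relation for any chosen factorisation of $\gamma$. The content of the proposition is therefore the \emph{well-definedness} of $\sim_\gamma$, i.e.\ the fact that the equivalence does not depend on the expression chosen for $\gamma$. To verify this, I would check that each defining relation of $TB_n^{\sim}$ (inherited from $\TB$) preserves the partition induced by the transitive closure. The commutation relations among the $\eta_{i,j}$ and the $\phi_j$ obviously preserve which pairs of strands are directly tied. The idempotency relations $\eta_i^2=\eta_i$ and $\phi_1^2=\phi_1$ (and their conjugates) simply add or delete a redundant copy of a tie, which does not affect the partition.

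The two nontrivial families of relations are (\ref{equivA}), $\eta_{i,k}\eta_{k,m}=\eta_{i,k}\eta_{i,m}=\eta_{k,m}\eta_{i,m}$, and (\ref{equivB}), $\eta_{i,j}\phi_i=\phi_i\phi_j=\phi_j\eta_{i,j}$. In both cases the three equal expressions correspond to the three possible ways of witnessing that strands $i,k,m$ (respectively $0,i,j$) lie in a common class via two direct ties; since the transitive closure treats all of them identically, the induced partition is unchanged. Thus each defining relation preserves $\sim_\gamma$, and the partition is a genuine invariant of $\gamma$ as an element of $TB_n^{\sim}$.

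The main obstacle is the bookkeeping in the well-definedness step: one must be sure that every relation of $TB_n^{\B}$ that survives inside the submonoid $TB_n^{\sim}$ is accounted for, in particular the $\phi$--$\eta$ interactions like (\ref{typeB1G}) and the identities (\ref{equivB}) obtained by conjugating (\ref{typeB7}). Once the latter two families are seen to respect the transitive closure, the proposition follows exactly as in the type $\mathtt{A}$ case.
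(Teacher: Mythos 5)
Your proposal is correct and follows essentially the same route as the paper, whose (one-line) proof likewise observes that reflexivity and symmetry are immediate and that transitivity rests on relations (\ref{equivA}) and (\ref{equivB}). Your version is simply more explicit, in particular in spelling out the well-definedness of the induced partition with respect to the chosen word for $\gamma$, a point the paper leaves implicit.
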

\begin{proof}
  The properties of reflexivity and symmetry are direct, whereas the transitivity property is implied by relations (\ref{equivA}) and (\ref{equivB})
\end{proof}

\begin{remark}\rm
  Let $\alpha=\gamma\beta$ be an element of $\TB$, where $\beta$ is a braid and $\gamma\in TB_n^{\sim}$. By Proposition~\ref{partition}, $\gamma$ induces a partition in the set of strands of $\beta$, or equivalently, a set-partition of $\{0,1,\ldots,n\}$, where $0$ represents the fixed strand, and $i$ represents the $i$-th moving strand, for $1\leq i\leq n$. (cf. \cite[Proposition 3]{fl})
\end{remark}

\section{The Alexander and Markov theorems for tied links in ST}\label{sec6}
The closure of a tied braid $\alpha$ in $\TB$, denoted by $\widehat{\alpha}$, is defined analogously as closure in $\widetilde{W}_n$ (see Section~\ref{knotsB}). Clearly, the result of closure $\widehat{\alpha}$, is a tied link in $ST$. Thus, we have a map $\widehat\ :TB_{\infty}^{\mathtt{B}} \rightarrow \T$. In this section, we prove that this map is surjective. We then define a set of Markov moves in $TB_{\infty}^{\mathtt{B}}$ in order to prove a Markov theorem for tied links.
\begin{theorem}\label{Alexander}\textbf{ (Alexander theorem for tied links in ST)}
Let $L$ be a link in $\T$. Then, there is $\alpha \in \TB$ such that $L=\widehat{\alpha}$.
\end{theorem}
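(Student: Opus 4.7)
The plan is to forget the ties, apply Theorem~\ref{AlexanderST} to the underlying link, and then reinsert the ties as generalised tie generators in the resulting tied braid.

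First, I would apply Theorem~\ref{AlexanderST} to the underlying oriented mixed link $L$, obtaining a braid $\beta\in\widetilde{W}_n$ whose closure $\widehat{\beta}$ is isotopic to $L$ in $ST$. Fix an ambient isotopy $H$ realising this equivalence. Because the ties in $L(I)$ are, by definition, notational devices attached to pairs of points of $L$ rather than embedded arcs, $H$ can be used to transport each tie endpoint along with its component: the image is a diagram of $\widehat{\beta}$ decorated with wavy arcs joining points on the strands of $\beta$. At this stage any tie whose two endpoints lie on the same component, or two ties of $L$ whose endpoints lie on the same pair of components of $\widehat\beta$, may be discarded or merged, since such modifications do not change the partition defining the tie isotopy class.

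Next, for each remaining tie I would slide its two endpoints along their respective components until both lie at a common height of $\beta$. By the discussion surrounding Equation~(\ref{genties}) and Figure~\ref{gentiesa}, a horizontal tie joining the $i$-th and $j$-th moving strands is precisely the generalised tie $\eta_{i,j}$; by the discussion around relation~(\ref{typeB6}) and Figure~\ref{gentiesb}, a horizontal tie joining the fixed strand and the $j$-th moving strand is $\phi_j$. Inserting each such generator at the appropriate height into the word for $\beta$ produces an element $\alpha\in\TB$. By Proposition~\ref{movility} we may decompose $\alpha=\gamma\beta$ with $\gamma\in TB_n^{\sim}$, and the underlying link of $\widehat\alpha$ is $\widehat\beta\simeq L$.

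The main obstacle I expect is verifying that the tie partition of $\widehat\alpha$ on components of $\widehat\beta$ matches the tie partition of $L$ induced by $I$. By Proposition~\ref{partition} and the remark following it, $\gamma$ induces a partition of the strand set $\{0,1,\ldots,n\}$, which descends to a partition on the components of $\widehat\beta$: two components are identified whenever a pair of their strands is identified by $\gamma$. Since each tie endpoint is only transported along its own component (first by $H$, then by horizontalisation), the component of an endpoint is invariant throughout, so the induced partition on components of $\widehat\beta$ coincides with that of $L$ under the correspondence supplied by $H$. Combined with the identification of underlying links, this yields $\widehat\alpha\sim_t L(I)$, as required.
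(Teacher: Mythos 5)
Your proposal is correct and follows essentially the same route as the paper: braid the underlying mixed link via Theorem~\ref{AlexanderST} (the paper invokes Lambropoulou's algorithm directly, keeping the fixed component unaltered), then use the elasticity and transparency of ties to slide their endpoints into braid position, where they become the generalised generators $\eta_{i,j}$ and $\phi_j$. Your explicit verification that the induced partition on components is preserved is a point the paper compresses into ``by construction tie isotopic to $L$,'' but it is the same argument.
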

\begin{proof}
Let $L$ be a tied link in $\T$. Recall from Section~\ref{knotsB} that $L$ can be regarded as a mixed link with ties. Then, we apply the algorithm proposed by S. Lambropoulou (see \cite[Section~2.1]{la1}) ignoring the ties. To do that, roughly speaking, we fix $O$ as the center of the fixed component. Then, we apply the Alexander procedure, thought maintaining the fixed component unaltered. Eventually, the resulting link could have ties connecting points in opposite sides from $O$. However, using that the ties ends can move freely along the strands and the transparency property, we can arrange them such that they lie in an annulus centered in $O$ (see Figure~\ref{Alexanderfig}). Finally, we obtain a tied braid by cutting along a half line with origin $O$. This tied braid is by construction tie isotopic to $L$.
\end{proof}

\begin{figure}
  \centering
  \includegraphics{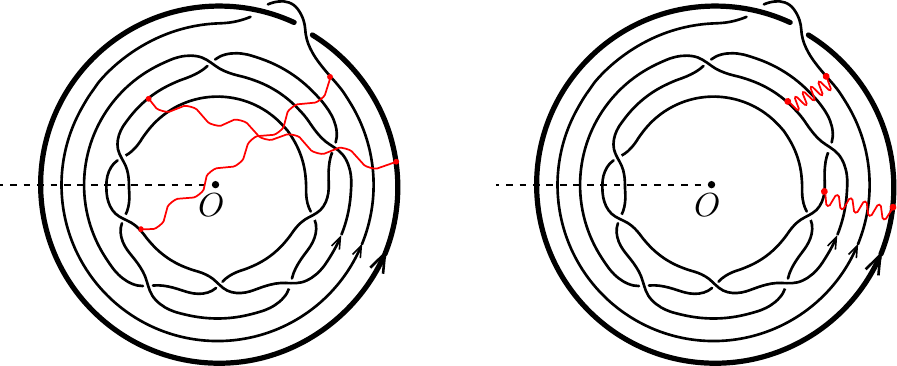}
  \caption{Rearrange of ties in Alexander Theorem }\label{Alexanderfig}
\end{figure}

In the following $\tau_\alpha$ denotes the image of $\alpha$ through the natural homomorphism from $W_n$ into the symmetric group $S_n$ (see Section~\ref{braidsB}).

\begin{definition}
Two tied braids in $TB_{\infty}^{\mathtt{B}}$ are $\sim_{M}$-equivalent if one can be obtained from the other by applying a finite sequence of the following moves:
\begin{itemize}
  \item[(i)] $\alpha\beta$ can be exchanged by $\beta\alpha$
  \item[(ii)] $\alpha\sigma$ can be exchanged by $\alpha\sigma_n$ or $\alpha\sigma_n^{-1}$
  \item[(iii)] $\alpha$ can be exchanged by $\eta_{i,j}\alpha$, if $\tau_{\alpha}(i)=j$.
  \item[(iv)]$\alpha$ can be exchanged by $\phi_j\alpha$, whenever $\tau_{\alpha}(i)=j$ and $\alpha$ constains $\phi_i$.
\end{itemize}
\end{definition}

If $\alpha$ and $\beta$ in $TB_{\infty}^{\mathtt{B}}$ are $\sim_{M}$-equivalent, we write $\alpha\sim_{M}\beta$.

\begin{theorem}\label{Markov}
 \textbf{ (Markov theorem for tied links in ST)}
 Let $\alpha_1, \alpha_2$ be tied braids in $\TB$. Then, the links $L_1=\hat{\alpha_1}$ and $L_2=\hat{\alpha_2}$ are tied isotopic if and only if $\alpha_1\sim_{M}\alpha_2$.
 \end{theorem}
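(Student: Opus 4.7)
The plan is to prove the two implications separately. For the easy direction $\alpha_1 \sim_M \alpha_2 \Rightarrow \widehat{\alpha_1} \sim_t \widehat{\alpha_2}$, each of the four generating moves must be checked to preserve the tie-isotopy class of the closure. Moves (i) and (ii) reduce to the classical Markov moves of Theorem~\ref{MarkovST}, enhanced by the observation that ties slide freely along strands under isotopy and that the stabilizing strand produced by $\sigma_n^{\pm 1}$ carries no tie. For move (iii), the hypothesis $\tau_\alpha(i)=j$ forces the top endpoints at positions $i$ and $j$ to lie in the same component of $\widehat{\alpha}$, so the added tie $\eta_{i,j}$ joins two points of a single component and is inessential. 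For move (iv), the presence of some $\phi_i$ in $\alpha$ already ties the component passing through strand $i$ to the fixed component, and the condition $\tau_\alpha(i)=j$ identifies the components through strands $i$ and $j$ in the closure, so the addition of $\phi_j$ is inessential as well.

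For the converse direction, the first step is to invoke Proposition~\ref{movility} to write $\alpha_k=\gamma_k\beta_k$ with $\beta_k\in\widetilde{W}_n$ and $\gamma_k\in TB_n^{\sim}$. The assumption $\widehat{\alpha_1}\sim_t\widehat{\alpha_2}$ says two things: the underlying mixed links $\widehat{\beta_1}$ and $\widehat{\beta_2}$ are isotopic in $ST$, and the partitions that $\gamma_1,\gamma_2$ induce on the component sets of $\widehat{\beta_1},\widehat{\beta_2}$ (including the fixed component) via Proposition~\ref{partition} coincide. Applying Theorem~\ref{MarkovST} to the underlying braids yields a finite sequence of classical Markov moves carrying $\beta_1$ to $\beta_2$. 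The next step is to lift this sequence to $\TB$: each time $\beta_k$ is conjugated or stabilized, the tie factor $\gamma_k$ is transported back across the braid using the commutation relations (\ref{movepro}) and (\ref{typeB2G}), preserving the factored form $\gamma\beta$, and the resulting alteration is an $\sim_M$-equivalence via moves (i) and (ii).

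This reduces the problem to the case in which the underlying braids already coincide, say $\alpha_k'=\gamma_k'\beta$ with $\gamma_k'\in TB_n^{\sim}$ inducing the same partition on $\{0,1,\ldots,n\}$ compatible with the cycle structure of $\tau_\beta$ (here $0$ labels the fixed strand). The final step is a normal-form lemma: any element of $TB_n^{\sim}$ whose induced partition is compatible with $\tau_\beta$ can be reduced, by the defining relations of $\TB$ together with moves (iii) and (iv), to a canonical representative depending only on the partition on components of $\widehat{\beta}$. Relations (\ref{equivA}) and (\ref{equivB}) permit redistribution of ties within a fixed equivalence class, while moves (iii) and (iv) allow the insertion or deletion of ties whose endpoints are already forced together by the braid.

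The main obstacle is precisely this normal-form step, together with the bookkeeping required when lifting classical Markov moves: the push-through of $\gamma_k$ across a conjugation or stabilization may introduce generators $\eta_{i,j}$ or $\phi_j$ that are not manifestly inessential. One must verify that the hypotheses of moves (iii) and (iv), namely $\tau_\alpha(i)=j$, respectively the presence of some $\phi_i$ in $\alpha$ with $\tau_\alpha(i)=j$, are satisfied exactly when the affected tie does not alter the component partition. This is essentially the argument of \cite{aiju2} in the type-$\mathtt A$ setting, carefully adapted to accommodate the fixed strand and the generators $\phi_j$ via the type-$\B$ relations (\ref{typeB1})--(\ref{typeB7}).
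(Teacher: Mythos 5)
Your overall architecture coincides with the paper's: both directions are handled by factoring $\alpha_k=\gamma_k\beta_k$ via Proposition~\ref{movility}, invoking the classical Markov theorem for $ST$ (Theorem~\ref{MarkovST}) to carry $\beta_1$ to $\beta_2$ with moves (i) and (ii), and then reducing to the comparison of two tie factors $\gamma_1,\gamma_2\in TB_n^{\sim}$ that induce the same partition on the components of $\widehat{\beta}$. You also supply the easy direction (each generating move preserves the tie--isotopy class of the closure), which the paper leaves implicit; that part of your argument is fine.

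The one place where your proposal stops short is exactly the step you flag as ``the main obstacle'': you posit a normal-form lemma for elements of $TB_n^{\sim}$ compatible with $\tau_\beta$ but do not prove it. The paper closes this gap not by exhibiting a canonical representative per se, but by a saturation trick: set $\gamma=\prod_{\tau_{\beta}(i)=j}\eta_{i,j}$, and for each cycle $c_k$ of $\tau_\beta$ set $\delta_{c_k}=\prod_{j\in c_k}\phi_j$ and $\delta=\prod_{\phi_i\in\gamma_1}\delta_{c_i}$. Every factor of $\gamma$ may be inserted by move (iii), and every factor of $\delta$ by move (iv); once both $\gamma_1$ and $\gamma_2$ are multiplied by $\delta\gamma$, relations (\ref{equivA}) and (\ref{equivB}) (together with \cite[Theorem 3.7]{aiju2} for the purely type-$\mathtt A$ part) force $\delta\gamma\gamma_1=\delta\gamma\gamma_2$ as elements of the monoid, whence $\gamma_1\sim_M\gamma_2$. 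If you want a complete proof along your lines, you should either reproduce this saturation argument or actually prove the normal-form lemma you invoke; as written, the crucial identification of the two tie factors is asserted rather than established. A second, smaller bookkeeping point: when you ``transport $\gamma_k$ back across'' a stabilization $\beta\mapsto\beta\sigma_n^{\pm1}$, the relations (\ref{movepro}) and (\ref{typeB2G}) may relabel indices of $\eta_{i,j}$ and $\phi_j$; this is harmless but deserves the explicit remark that the induced partition on components of the closure is unchanged, which is what ultimately feeds into the saturation step.
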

 \begin{proof}
  Firstly, note that considering the ties properties (elasticity, transparency), we can proceed for tied links as in the proof of Markov theorem for classical links in $ST$ (see \cite[Theorem 3]{la1}).\smallbreak Let $\alpha_1$ and $\alpha_2$ be tied braids in $\TB$. Thus, we have that $\alpha_1=\gamma_1\beta_1$ and $\alpha_2=\gamma_2\beta_1$ according to Proposition~\ref{movility}.
    Set $L_i=\widehat{\alpha_i}$, for $i=1,2$, and suppose that $L_1$ and $L_2$ are isotopic tied links. We have to prove that $\alpha_1\sim_M \alpha_2$. By Definition~\ref{defisot}, we have that $L_1$ and $L_2$ are isotopic as links in $ST$. Thus, we have that $\alpha_1$ and $\alpha_2$ are related by moves of type (i) and (ii), which coincide with the classical Markov moves in ST. Thus, we have that $\beta_1$ and $\beta_2$  are $\sim_M$--equivalent. More precisely, we can transform $\beta_1$ into $\beta_2$ by using (i) and (ii) moves. Thus, after applying this moves, we have that $\beta_1$ and $\beta_2$ consist in the same braid, denoted by $\beta$. \smallbreak

\begin{figure}
  \centering
  \includegraphics{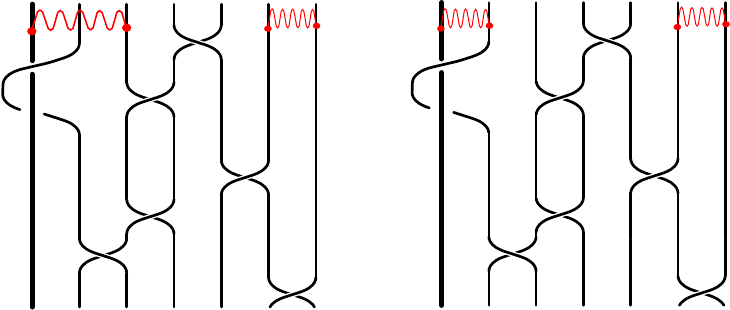}
  \caption{Two different tied braids that have the same closure}\label{ejtied}
\end{figure}

 Therefore, by now, we have that $\alpha_i\sim_M \gamma_i\beta$, for $i=1,2$. Since $L_1\sim_t L_2$, we also know that the set of ties corresponding to $\alpha_1$ and $\alpha_2$ define the same partition in the set of components of $\widehat{\beta}$. However, this fact does not imply that $\gamma_1=\gamma_2$ (for instance, see Figure~\ref{ejtied} ). Therefore, it is enough to prove that we can transform $\gamma_1$ into $\gamma_2$ by applying moves of type (iii) and (iv). If $\gamma_1$ and $\gamma_2$ just contain ties joining the moving strands. By \cite[Theorem 3.7]{aiju2}, we have that $\gamma\gamma_1=\gamma\gamma_2$, where
 \begin{equation}\label{estabA}
   \gamma=\prod_{\tau_{\beta}(i)=j}\eta_{i,j}.
 \end{equation}
That is, $\gamma_1\sim_M\gamma_2$ by using move (iii).
 We now suppose that $\gamma_1$ and $\gamma_2$ have some tie interacting with the fixed strand. Let us say that $\gamma_1$ contains $\phi_i$. Let $c_i$ be the cycle of $\tau_{\beta}$ that contains $i$. Then, $\gamma_2$ must contain $\phi_i$ or $\phi_j$, for some $j$ in the cycle $c_i$, since, $\gamma_1$ and $\gamma_2$ define the same partition in the set of components of $\widehat{\beta}$. For a cycle $c_k$ of $\tau_{\beta}$, we define
 $$\delta_{c_k}:=\prod_{j\in c_k}\phi_j.$$ Now, set

 $$\delta=\prod_{\phi_i\in \gamma_1} \delta_{c_{i}},$$
 where $c_i$ is the cycle of $\tau_{\beta}$ containing $i$. Then, we have that $\delta\gamma\gamma_1=\delta\gamma\gamma_2$, where $\gamma$ is the element from (\ref{estabA}). Thus, $\gamma_1\sim_M\gamma_2$ by using moves (iii) and (iv).
 \end{proof}
\section{The invariant $\mathcal{F}_{\B}$ via Jones' method}\label{sec7}
 The goal of this section is to recover the invariant $\mathcal{F}_{\B}$ by using Jones method. Firstly, observe that, by applying Theorems \ref{Alexander} and \ref{Markov}, we have a correspondence between isotopy classes of $\mathcal{T}_{ST}$ and the set of equivalence classes in $TB^{\mathtt{B}}_\infty$ (according to $\sim_M$). Secondly, we define a natural representation from $TB^{\mathtt{B}}_n$ into the algebra an algebra of braids and ties of type $\mathtt{B}$ \cite{fl}. This algebra supports a Markov trace, hence we may apply Jones' method to obtain the invariant $\overline{\Delta}_{\B}$. We then probe that this invariant is equivalent to $\f_{\B}$ from Section~\ref{sec4}.

\subsection{An algebra of braids and ties of type $\mathtt{B}$} We begin recalling the definition of the algebra introduced in \cite{fl}, which is an analogous of the classical bt--algebra in the context of Coxeter groups of type $\mathtt{B}$.

\begin{definition}\label{bt-typeB}
  Let $n\geq 2$. We define a bt--algebra of type $\B$, denoted by $\E=\E(\U,\V)$, as the algebra generated by $B_1, T_1\dots, T_{n-1}$ and $F_1,\dots F_n, E_1\dots, E_{n-1}$, subject to the following relations
 \begin{eqnarray}
    T_iT_j &=& T_jT_i\quad \mbox{for all $|i-j|>1$}, \label{first}\\
    T_iT_{i+1}T_i &=&T_{i+1}T_iT_{i+1} \quad \mbox{for all $1\leq i\leq n-2$}, \\
    T_i^2&=&1+(\U-\U^{-1})E_iT_i \quad \mbox{for all $1\leq i\leq n-1$}, \\
    E_i^2 &=& E_i  \quad \mbox{for all $i$},\\
    E_iE_j&=&E_jE_i  \quad \mbox{for all $i, j$}\\
    E_iT_i&=&T_iE_i   \quad \mbox{for all $1\leq i\leq n-1$},\\
    E_iT_j&=&T_jE_i \quad \mbox{for all $|i-j|>1$}, \\
    E_iE_jT_i&=&T_iE_iE_j= E_j T_i E_j \quad \mbox{for all $|i-j|=1$},\\
    E_iT_jT_i&=&T_jT_iE_j\quad \mbox{for all $|i-j|=1$},\label{lastA}\\
    B_1T_1B_1T_1&=&T_1B_1T_1B_1,\label{firstB}  \\
    B_1T_i&=&T_iB_1 \quad \mbox{for all $i>1$},\\
    B_1^2&=&1+(\V-\V^{-1})F_1B_1,\label{quadratictipoB} \\
    B_1E_i&=&E_iB_1\quad \mbox{for all $i$}, \label{last}\\
    F_i^2&=&F_i \quad \mbox{for all $i$}, \label{idempotentf} \\
    B_1F_j&=&F_jB_1 \quad \mbox{for all $j$}, \label{actb}\\
    F_iE_j&=&E_jF_i \quad \mbox{for all $i,j$},\\
    F_jT_i&=&T_iF_{s_i(j)} \label{perm}\quad \text{where $s_i=(i,i+1)$},\\
    E_iF_i&=&F_iF_{i+1}=E_iF_{i+1} \label{lastB}\quad \mbox{for all $1\leq i\leq n-1$}.
 \end{eqnarray}
 For $n=1$, we define the algebra $\mathcal{E}_1^{\B}$ as the algebra generated by $1, B_1$ and $F_1$ subject to the relations (\ref{quadratictipoB}), (\ref{idempotentf}) and (\ref{actb}).
\end{definition}

\begin{proposition}\label{rep}
  The mapping $\sigma_i\mapsto T_i$, $\rho_1\mapsto B_1$, $\eta_i\mapsto E_i$ and $\phi_1\mapsto F_1$ defines a representation from $\TB$ into $\E$, denoted by $\theta$.
\end{proposition}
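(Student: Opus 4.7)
The plan is to verify that every defining relation of $\TB$ holds in $\E$ under the proposed assignment; since $\TB$ is given by a monoid presentation, this suffices to extend $\theta$ uniquely to a monoid homomorphism.

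A preliminary step is to make sense of images of $\sigma_i^{-1}$ and $\rho_1^{-1}$, since several defining relations of $\TB$ (for instance (\ref{typeA5}) and (\ref{typeB6})) involve $\sigma_i^{-1}$. Using the quadratic relations of $\E$ together with $T_iE_i=E_iT_i$ and $B_1F_1=F_1B_1$, one checks that $T_i^{-1}=T_i-(\U-\U^{-1})E_i$ and $B_1^{-1}=B_1-(\V-\V^{-1})F_1$ are two-sided inverses.

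Most of the defining relations of $\TB$ coincide literally with relations already imposed in $\E$. This takes care of the type $\mathtt{B}$ braid relations (\ref{braidB}), the type $\mathtt{A}$ tie relations (\ref{typeA1})--(\ref{typeA4}) and (\ref{typeA6})--(\ref{typeA7}), and the type $\mathtt{B}$ relations (\ref{typeB1})--(\ref{typeB5}); for (\ref{typeB5}) one notes that $s_i(1)=1$ when $i\geq 2$, so (\ref{perm}) in $\E$ specializes to the required commutation. Three relations remain to verify. Relation (\ref{typeA5}) follows from (\ref{lastA}): after multiplying $E_iT_jT_i=T_jT_iE_j$ on the right by $T_i^{-2}$, the claim reduces to $T_i^2E_j=E_jT_i^2$ for $|i-j|=1$, which one verifies by expanding the quadratic relation and invoking (\ref{typeA6}) together with the commutativity of the $E$'s. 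Relation (\ref{typeB7}) follows by computing $T_1F_1T_1^{-1}=F_2$ via (\ref{perm}), whence $F_1T_1F_1T_1^{-1}=F_1F_2=F_1E_1$ by (\ref{lastB}); the remaining equality in (\ref{typeB7}) is handled symmetrically.

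The main obstacle is expected to be relation (\ref{typeB6}). Here the plan is to prove by induction on $i$ that both iterated conjugates $T_{i-1}\cdots T_1F_1T_1^{-1}\cdots T_{i-1}^{-1}$ and $T_{i-1}^{-1}\cdots T_1^{-1}F_1T_1\cdots T_{i-1}$ equal $F_i$. The inductive step reduces to $T_jF_jT_j^{-1}=T_j^{-1}F_jT_j=F_{j+1}$, equivalently to the identity $T_j^2F_j=F_jT_j^2$. Expanding the quadratic relation reduces this further to $E_jF_{j+1}=E_jF_j$, after using (\ref{perm}) to push $F_j$ across $T_j$ and exploiting the commutation of the $F$'s with $E_j$---and this last identity is precisely (\ref{lastB}).
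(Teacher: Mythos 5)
Your proposal is correct and follows essentially the same route as the paper: one checks that the images of the generators satisfy the defining relations of $\TB$, the type-$\mathtt{A}$ relations being handled by the corresponding relations of $\E$ (the paper simply cites \cite[Proposition 4.2]{aiju2} here) and the type-$\B$ relations by the relations (\ref{firstB})--(\ref{lastB}). Your explicit verifications of (\ref{typeA5}), (\ref{typeB6}) and (\ref{typeB7}), including the construction of $T_i^{-1}=T_i-(\U-\U^{-1})E_i$ and the reduction of the conjugation identities to (\ref{perm}) and (\ref{lastB}), are accurate and merely supply details that the paper leaves implicit.
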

\begin{proof}
  It is enough to prove that $T_i$, $B_1$, $E_i$ and $F_1$, for $1\leq i \leq n-1$ satisfy the defining relations of $\TB$. 
  By \cite[Proposition 4.2]{aiju2}, we have that the relations of type $\mathtt{A}$ are satisifed by the elements $T_i$'s and $E_i$'s. On the other hand, using the defining relations (\ref{firstB})--(\ref{lastB}) of $\E$, we obtain that the generators of $\E$ also satisfy type $\mathtt{B}$ relations.

\end{proof}
We now recall the definition of the Markov trace supported by the algebra $\E$, which is the main ingredient of the Jones method.
\begin{theorem}\label{Markovtrace}
  $\mathtt{tr}$ is a Markov trace on $\{\E\}_{n\geq 1}$. That is, for all $n\geq 1$, the linear map $\mathtt{tr}_n:\E\rightarrow \mathbb{K}$ satisfies the following properties:

\begin{itemize}
  \item[(i)] $\mathtt{tr}_{n}(1)  = 1$
  \item[(ii)] $\mathtt{tr}_{n+1}(XT_{n}) = \mathtt{tr}_{n+1}(XE_{n}T_n)=  \z \mathtt{tr}_{n}(X) $
  \item[(iii)] $\mathtt{tr}_{n+1}(XE_{n}) = \mathtt{tr}_{n+1}(XF_{n+1}) =\x \mathtt{tr}_{n}(X)$
  \item[(iv)] $\mathtt{tr}_{n+1}(XB_{n}) =\y \mathtt{tr}_{n}(X)$
  \item[(v)] $\mathtt{tr}_{n+1}(XB_{n+1}E_n) =\mathtt{tr}_{n+1}(XB_{n+1}F_{n+1})=\w \mathtt{tr}_{n}(X)$
  \item[(vi)] $\mathtt{tr}_n(XY)  =  \mathtt{tr_n}(YX), $

\end{itemize}
where $X,Y\in \E$.
\end{theorem}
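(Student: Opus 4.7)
The plan is to invoke the construction from \cite[Section 5]{fl}, where this trace is established; I sketch the strategy for completeness. The central idea is the relative trace method: the algebra $\E$ embeds naturally as a subalgebra of the type $\mathtt{B}$ Yokonuma--Hecke algebra $\Y(\U,\V)$, and the latter carries a Markov trace by \cite[Theorem 3]{fjl}. One then obtains $\tr$ on $\E$ by restricting this Yokonuma--Hecke trace along the embedding, with parameters specialized so that clauses (i)--(v) match the prescribed values on $T_i$, $E_i$, $B_1$, $F_1$, and the mixed products $B_{n+1}E_n$, $B_{n+1}F_{n+1}$.

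First, I would establish an inductive normal form in $\E$: using the defining relations, every element of $\E$ is a $\mathbb{K}$--linear combination of monomials $X\,g$, where $X$ lies in the natural copy of $\mathcal{E}_{n-1}^{\mathtt{B}}$ inside $\E$ and $g$ belongs to a fixed small list, namely
\[
\{\,1,\; T_{n-1},\; E_{n-1},\; E_{n-1}T_{n-1},\; F_n,\; B_n,\; B_nE_{n-1},\; B_nF_n\,\},
\]
possibly further multiplied on the right by elements of $\mathcal{E}_{n-1}^{\mathtt{B}}$. This is the type $\mathtt{B}$ analogue of the inductive basis used in \cite[Section 4]{aijuMMJ} and it already underlies the construction of $\E$ in \cite{fl}. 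Second, clauses (ii)--(v) then prescribe $\tr_{n}(X\,g)$ in terms of $\tr_{n-1}(X)$ for each $g$ in the list, while (i) anchors the recursion at $n=1$, where $\mathcal{E}^{\mathtt{B}}_1$ is spanned by monomials in $B_1$ and $F_1$. Third, well-definedness amounts to checking that this recursion is compatible with the defining relations, most delicately (\ref{perm}), the identity (\ref{lastB}), and the mixed braid relation (\ref{firstB}) governing how $B_n$ moves past lower-index generators.

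The main obstacle is the trace property (vi), $\tr(XY)=\tr(YX)$. By linearity it suffices to prove this for $Y$ ranging over the generators $T_i$, $E_i$, $F_j$, $B_1$. The cases $Y=T_i$ and $Y=E_i$ are inherited, with minor adaptations, from the argument for the type $\mathtt{A}$ bt--algebra in \cite[Section 4]{aijuMMJ}. The genuinely new cases are $Y=B_1$ and $Y=F_j$: here one must track how a loop or a tie to the fixed strand is conjugated past the normal form representative of $X$, using the permutation action $F_jT_i=T_iF_{s_i(j)}$ and the commutation $B_1E_i=E_iB_1$. The combinatorial core is the identity $E_iF_i=F_iF_{i+1}=E_iF_{i+1}$, which forces clauses (iii) and (v) of the theorem to produce the same numerical output on both sides of a cyclic rearrangement; organizing this case analysis across all cycle types of the permutation $\tau_\beta$ associated with the braid part of $X$ (in the sense of Proposition~\ref{partition}) is what makes (vi) the most technical step of the proof.
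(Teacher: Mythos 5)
The paper itself offers no proof of Theorem~\ref{Markovtrace}: it is recalled verbatim from \cite[Section 5]{fl}, so there is no in-paper argument to compare yours against, and your opening move of deferring to \cite{fl} is exactly what the author does. Your sketch of how that external proof goes is broadly faithful to the relative-trace/inductive-basis method used in \cite{fl} and its type-$\mathtt{A}$ prototype \cite[Section 4]{aijuMMJ}: one spans $\E$ inductively over $\mathcal{E}_{n-1}^{\mathtt{B}}$ by a short list of "new" elements, defines the trace by the recursion forced by (i)--(v), checks compatibility with the defining relations, and then fights through the conjugation property (vi) generator by generator, with the genuinely new cases being $B_1$ and the $F_j$'s. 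One caution about your first paragraph, though: "restricting the Yokonuma--Hecke trace of \cite[Theorem 3]{fjl} along the embedding" is not the same thing as the method of relative traces, and taken literally it would not yield the theorem as stated. The embedding of $\E$ into $\Y$ depends on $d$, and the restricted trace takes values that are specific symmetric functions of the Juyumaya-type parameters of $\Y$, so it cannot produce the six algebraically independent indeterminates $\x,\y,\w,\z$ (together with $\U,\V$) appearing in (i)--(v) without further work (an E-system-type analysis or an intrinsic construction). The construction in \cite{fl} is intrinsic: the conditional expectations $\E\to\mathcal{E}_{n-1}^{\mathtt{B}}$ are defined directly on the inductive spanning set, which is precisely what your second and third paragraphs describe. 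If you drop or reword the "restriction" sentence and lead with the inductive recursion, your outline matches the cited proof.
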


In \cite[Section 6]{fl}, we define the invariant $\overline{\Delta}_{\mathtt{B}}$ for classical links in the solid torus, by using Jones method. This, invariant is essentially the composition of $\pi$, the natural representation of $W_n$ into $\E$, and the Markov trace from Theorem~\ref{Markovtrace} (up to normalization and re--escalation). Analogously, we now construct an extension of such invariant, which is also denoted by $\overline{\Delta}_{\mathtt{B}}$, to simplify notation.

Set
\begin{equation}\label{CapitalLambda}
\mathsf{L} := \frac{\z - ({\U}- {\U}^{-1})\x}{\z} \quad \text{and} \quad D:=\frac{1}{\z \sqrt{\mathsf{L}}}.
\end{equation}
Let $\theta_{\mathsf{L}}$ be the representation of $\TB$ in $\E$, defined by the mapping $\sigma_i \mapsto \sqrt{\mathsf{L}}T_i$, $\rho_1\mapsto B_1$, $\eta_i\mapsto E_i$ and $\phi_1\mapsto F_1$. Then, for $\alpha\in \TB $, we define
\begin{equation}\label{inv1}
\overline{\Delta}_{\mathtt{B}}(\alpha):=(D)^{n-1}(\mathtt{tr}_n\circ \theta_{\mathsf{L}})(\alpha).
\end{equation}
It is well know that the previous expression can be rewritten as follows
\begin{equation}\label{inv2}
\overline{\Delta}_{\mathtt{B}}(\alpha)=(D)^{n-1}(\sqrt{\mathsf{L}})^{e(\alpha)}(\mathtt{tr}_n\circ \theta)(\alpha),
\end{equation}
where $e(\alpha)$ is the exponent sum of the $\sigma_i$'s appearing in the braid $\alpha$, and $\theta$ is the representation from Proposition~\ref{rep}. Similarly to \cite[Theorem 4]{fl}, we obtain that $\overline{\Delta}_{\mathtt{B}}$ is an invariant for tied links in $ST$. Moreover, we have the following result.

\begin{theorem}
  Let $L$ be a tied link in $\mathcal{T}_{ST}$ obtained by closing a braid $\alpha\in \TB$. Then, we have $\overline{\Delta}_{\mathtt{B}}(\alpha)= \mathcal{F}_{\B}(L)$.
\end{theorem}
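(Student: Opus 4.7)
The plan is to use the uniqueness in Theorem~\ref{skeinth}: if $\overline{\Delta}_{\mathtt{B}}$ descends to a well-defined function on $\T$ and satisfies conditions (i)--(iv) there, then it must equal $\f_{\B}$. Well-definedness amounts to invariance of $\overline{\Delta}_{\mathtt{B}}$ under the Markov moves of Theorem~\ref{Markov}. Moves (i) (conjugation) and (ii) (standard stabilization) follow exactly as in the classical case (cf.\ \cite[Theorem 4]{fl}) from the cyclicity of $\mathtt{tr}$ together with axioms (i)--(ii) of Theorem~\ref{Markovtrace} and the rescaling by $D$ and $\sqrt{\mathsf{L}}$. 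Moves (iii) and (iv) --- insertion of a tie $\eta_{i,j}$ or $\phi_j$ whose endpoints lie on strands belonging to the same cycle of $\tau_\alpha$ --- are handled by mimicking the type--$\mathtt{A}$ argument of \cite[Theorem 3.7]{aiju2}: using the tie-commutation relations (\ref{movepro}) and (\ref{typeB2G}) together with cyclicity of $\mathtt{tr}$, the inserted tie is transported to a factor $E_n$ or $F_{n+1}$ at the last strand, whose normalization constant $\x$ is absorbed because the corresponding components were already in the same tied class.

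With well-definedness in place, the skein rules (iii) and (iv) of Theorem~\ref{skeinth} follow from the quadratic relations in $\E$. From $T_i^2 = 1 + (\U - \U^{-1})E_i T_i$ one obtains $T_i - T_i^{-1} = (\U - \U^{-1})E_i$; writing the three tied braids as $\alpha_\pm = X\sigma_i^{\pm 1}Y$ and $\alpha_\sim = X\eta_i Y$, and noting that $e(\alpha_+) = e(\alpha_\sim) + 1 = e(\alpha_-) + 2$, substituting into (\ref{inv2}) and using the linearity of $\mathtt{tr}$ yields
\[
\tfrac{1}{\lambda}\overline{\Delta}_{\mathtt{B}}(\alpha_+) - \lambda\,\overline{\Delta}_{\mathtt{B}}(\alpha_-) = (\U - \U^{-1})\,\overline{\Delta}_{\mathtt{B}}(\alpha_\sim).
\]
An identical computation based on $B_1^2 = 1 + (\V - \V^{-1})F_1 B_1$ gives skein rule (iv); no $\sqrt{\mathsf{L}}$ factor enters because $\theta_{\mathsf{L}}(\rho_1) = B_1$.

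For the initial conditions I realize the tied unknots of Figure~\ref{unknots} as closures of specific elements of $TB_1^{\B}$, namely $1$, $\phi_1$, $\rho_1^{\pm 1}$, and $\rho_1^{\pm 1}\phi_1$. Then $\overline{\Delta}_{\mathtt{B}}(\widehat{1}) = \mathtt{tr}_1(1) = 1$, while axioms (iii)--(v) of Theorem~\ref{Markovtrace} (with the convention $\mathtt{tr}_0(1) = 1$) yield $\mathtt{tr}_1(F_1) = \x$, $\mathtt{tr}_1(B_1) = \y$, and $\mathtt{tr}_1(B_1 F_1) = \w$, matching Theorem~\ref{skeinth}(i). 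The disjoint union rules (ii) follow by stabilization: if $L = \widehat{\alpha}$ with $\alpha \in TB_n^{\B}$, then $L \sqcup \bigcirc$ is the closure of $\alpha$ viewed in $TB_{n+1}^{\B}$, which contributes one extra factor of $D = 1/(\z\lambda)$ and no additional trace constant; adjoining a new strand carrying $\phi_{n+1}$, $B_{n+1}$, or $B_{n+1}\phi_{n+1}$ instead introduces the constants $\x, \y, \w$ through axioms (iii)--(v), giving the remaining disjoint-union cases. The $\tilde{\sqcup}$-variants come from multiplying the stabilized braid by an additional $E_n$ or $F_{n+1}$, which is again absorbed by axiom (iii).

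The main obstacle is the verification of well-definedness under the tie-stabilization Markov moves (iii) and (iv). Whereas the skein-rule check is a short algebraic manipulation, transporting an inserted $\eta_{i,j}$ or $\phi_j$ through the braid via the generalized tie relations (\ref{movepro})--(\ref{equivB}) and checking that the resulting trace value depends only on the induced set-partition of components requires careful combinatorial bookkeeping. This is the exact type--$\B$ analogue of \cite[Theorem 3.7]{aiju2}, now complicated further by the elements $\phi_j$ interacting with the fixed strand, which force a coordinated use of trace axioms (iii) and (v) in order to correctly account for the constants $\x$ and $\w$.
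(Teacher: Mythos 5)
Your proposal is correct and follows essentially the same route as the paper: invoke the uniqueness clause of Theorem~\ref{skeinth} and verify that $\overline{\Delta}_{\mathtt{B}}$ is a well-defined invariant satisfying the initial conditions and both skein rules, the latter via the quadratic relations of $\E$ together with the exponent bookkeeping $e(\alpha_+)=e(\alpha_\sim)+1=e(\alpha_-)+2$ and the rescaling by $\sqrt{\mathsf{L}}$. The only difference is one of thoroughness: you spell out the Markov-move invariance (including the tie-stabilization moves) and the disjoint-union rules, whereas the paper delegates the former to the analogy with \cite[Theorem 4]{fl} stated just before the theorem and leaves the latter implicit.
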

\begin{proof}
 It is enough to prove that the invariant $\overline{\Delta}_{\mathtt{B}}$ satisfies the skein relations of $\mathcal{F}_{\B}$ (see Theorem~\ref{skeinth}). Firstly, note that the unknots $\bigcirc, \tilde{\bigcirc}, \bigcirc_+$ $\tilde{\bigcirc}_+$ correspond to $\widehat{1}$, $\widehat{\phi_1}$, $\widehat{\rho_1}$ and  $\widehat{\rho_1\phi_1}$, respectively. Thus, by trace conditions, we have that $\overline{\Delta}_{\mathtt{B}}(\alpha)=\mathcal{F}_{\B}(\widehat{\alpha})$ for all $\alpha\in TB_1^{\mathtt B} $, that is, $\overline{\Delta}_{\mathtt{B}}$ satisfies the initial conditions (i) from Theorem~\ref{skeinth}.\smallbreak
 Let $\alpha$ be a tied braid in $\TB$, and set $L=\widehat{\alpha}$. Let $\alpha_+$, $\alpha_-$ and $\alpha_{\sim}$ be the tied braids that are identical outside the small disk, whereas inside the disk look according to Figure~\ref{skein1}. Then, we have that $L_+=\alpha_+$, $L_-=\alpha_-$ and $L_{\sim}=\alpha_{\sim}$.
 \smallbreak
 Therefore, using the quadratic relation of $\E$, we have that
  \begin{equation}\label{relaciontraza}
    \tr(\theta(\alpha_+))=\tr(\theta(\alpha_-))+(\U-\U^{-1})\tr(\theta(\alpha_{sim})).
  \end{equation}
Note now that $e(\alpha_{\sim})=e(\alpha_{+})-1=e(\alpha_{-})+1$. Thus, we have
\begin{align*}
  \overline{\Delta}_{\mathtt{B}}(\alpha_+)=& D^{n-1}\lambda^{e(\alpha_{\sim})+1}\tr(\theta(\alpha_+)), \quad \overline{\Delta}_{\mathtt{B}}(\alpha_-)=D^{n-1}\lambda^{e(\alpha_{\sim})-1}\tr(\theta(\alpha_-))\ \mbox{\ and}  \\
    \overline{\Delta}_{\mathtt{B}}(\alpha_{\sim})=&D^{n-1}\lambda^{e(\alpha_{\sim})}\tr(\theta(\alpha_{\sim}))
\end{align*}
Therefore, from Eq. (\ref{relaciontraza}) we obtain
$$\frac{1}{\lambda}\overline{\Delta}_{\mathtt{B}}(\alpha_+)-\lambda\overline{\Delta}_{\mathtt{B}}(\alpha_-)=(\U-\U^{-1})\overline{\Delta}_{\mathtt{B}}\alpha_{\sim}),$$
as we wanted. Finally, we can prove analogously that $\overline{\Delta}_{\mathtt{B}}(L_+)$ satisfies the second skein relation by using the quadratic relation (\ref{quadratictipoB}) of $\E$.
 \end{proof}

\section*{Acknowledgements.} The author wishes to thank the Math section of ICTP, where the paper was written, for the invitation and hospitality. In particular, the author wishes to express his gratitude to Francesca Aicardi for their helpful comments, which were essential to develop this work.

 \bibliography{TLinST}{}
\bibliographystyle{acm}

\end{document}